\numberwithin{equation}{section} \numberwithin{figure}{section}
\DeclareMathOperator{\Pic}{Pic} 
 \DeclareMathOperator{\Spec}{Spec}
\DeclareMathOperator{\Cox}{Cox} \DeclareMathOperator{\rank}{rank}
\DeclareMathOperator{\Hom}{Hom} \DeclareMathOperator{\re}{Re}
\DeclareMathOperator{\im}{Im} \DeclareMathOperator{\vol}{vol}
\newcommand{\bfeta}{\mbox{\boldmath$\eta$}}
\newcommand{\bfalpha}{\mbox{\boldmath$\alpha$}}
\newcommand{\Mob}{M\"{o}bius }
\newtheorem{thm}{Theorem}[section]
\newtheorem{lem}{Lemma}[section]
\theoremstyle{definition}
\newtheorem*{df}{Definition}
\title[Manin's Conjecture]{Manin's Conjecture for a Singular Sextic Del Pezzo Surface}
\author{\sc Daniel Loughran}
\address{Daniel Loughran \\
Department of Mathematics \\
University Walk \\
Bristol \\
UK, BS8 1TW}
\email{Daniel.Loughran@bristol.ac.uk}
\subjclass[2000]{11D45; 14G05, 14G10}
\begin{document}

\maketitle


\begin{abstract}
    We prove Manin's conjecture for a del Pezzo surface of
    degree six which has one singularity of type $\mathbf{A}_2$. Moreover, we achieve
    a meromorphic continuation and explicit expression of the associated height zeta function.
\end{abstract}

\tableofcontents

\bigskip

\section{Introduction} In this paper, our aim is to count the
number of rational points of bounded height on the surface $S
\subset \mathbb{P}^6$ given by
\begin{align}
    x_{3}^{2} + x_{0} x_{5} + x_{1} x_{6} & = x_{2} x_{3} - x_{0}
    x_{6} =x_{1} x_{2} + x_{0} x_{3} + x_{0} x_{4}= 0,\nonumber \\
    x_{3} x_{5} + x_{4} x_{5} + x_{6}^{2} & = x_{2} x_{5} - x_{4} x_{6}
    = x_{1} x_{5} - x_{3} x_{6}= 0,\label{equations}\\
    x_{4}^{2} + x_{0} x_{5} + x_{2} x_{6} & = x_{3} x_{4} - x_{0} x_{5}
    = x_{1} x_{4} - x_{0} x_{6}= 0.\nonumber
\end{align}
This surface is an example of a singular del Pezzo surface of degree
$6$. A priori, it might not be clear why this is a natural
diophantine problem. However in 1989, Manin and his collaborators
\cite{FMT89} formulated a general conjecture on the number of
rational points of bounded height on Fano varieties. There is a
programme (see \cite{BB07} or \cite{DT07} for example) to try to
prove this conjecture for Fano surfaces, namely \emph{del Pezzo
surfaces} and their singular counterparts. Such surfaces have a
well-known classification in terms of their singularity type and
degree. See \cite{Man86} and \cite{CT88} for more information on
smooth and singular del Pezzo surfaces respectively, and
\cite{Bro07} for a general overview of Manin's conjecture for del
Pezzo surfaces.

The surface $S$ has one singularity of type $\mathbf{A}_2$, which we
can resolve using blow-ups to create two exceptional curves on the
minimal desingularisation $\widetilde{S}$ of $S$. The set of
equations $(\ref{equations})$ correspond to the embedding induced by
a divisor in the \emph{anticanonical divisor class}. Since $S$ is
singular normal with only rational double points, by \cite[Prop.
0.1]{CT88} an anticanonical divisor of $S$ can be taken to be any
divisor on $S$ which pulls back to an anticanonical divisor on the
minimal desingularisation $\widetilde{S}$. The anticanonical
embedding is a natural choice, for example in this embedding the
lines are exactly the $(-1)$-curves and Manin's conjecture takes a
simpler form. The height function associated to the chosen embedding
is the usual height on projective space, namely given $x \in
S(\mathbb{Q})$, we have $H(x)=\max_{0\leq i \leq 6}|x_i|$, where
$(x_0,\ldots,x_6)$ is a primitive integer vector in the affine cone
above $x$. Further details about the geometry of $S$ can be found in
Lemma~\ref{lem:geom}.

Now, $S$ contains the two lines
\begin{align*}
    &L_1: x_1=x_3=x_4=x_5=x_6=0, \\
    &L_2: x_2=x_3=x_4=x_5=x_6=0,
\end{align*}
which both contain ``many" rational points whose contribution will
dominate the counting problem. Hence, it is natural to let
$U=S\setminus \{L_1 \cup L_2\}$ and take
$$N_{U,H}(B)=\#\{ x \in U(\mathbb{Q}) : H(x) \leq B\}$$
to be the associated counting function. In this context, Manin's
conjecture predicts an asymptotic formula of the shape
$$N_{U,H}(B) \sim c_{\widetilde{S},H}B (\log B)^ {\rho-1}$$
as $B \to \infty$, where $\rho=\rank (\Pic(\widetilde{S}))=4$ and
$c_{\widetilde{S},H}$ is some constant. In this paper, we establish
a significantly sharper version of this estimate.
\begin{thm}\label{thm:asym}
    Let $\varepsilon >0$. Then there is a monic cubic polynomial $P \in
    \mathbb{R}[x]$ such that
    $$N_{U,H}(B) = c_{\widetilde{S},H} B P(\log B) +
    O_\varepsilon(B^{7/8+\varepsilon})$$
    where
    $c_{\widetilde{S},H}=\alpha(\widetilde{S}) \tau_\infty(\widetilde{S})  \prod_p \tau_p(\widetilde{S})$ and
    \begin{align*}
        \alpha(\widetilde{S}) &= 1/432,\quad\tau_p(\widetilde{S})=
        \left(1-\frac{1}{p}\right)^4\left(1 + \frac{4}{p} + \frac{1}{p^2}\right), \\
        \tau_\infty(\widetilde{S}) & =
        6\int_{\{t,v,u \in \mathbb{R}:0<|t(ut+v^2)|,|uvt|,
        |uvt+v^3|,|u^2t|,|u^2t+uv^2|,u^3,u^2v\leq1\}} \mathrm{d}u\mathrm{d}v\mathrm{d}t.
    \end{align*}
\end{thm}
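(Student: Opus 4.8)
The plan is to pass to the universal torsor above $\widetilde{S}$ in order to convert the counting problem into a lattice-point count. First I would work out the Cox ring of $\widetilde{S}$ explicitly, using the classification of singular del Pezzo surfaces and the realisation of the $\mathbf{A}_2$ configuration; this should produce a description of $U(\mathbb{Q})$ points of height $\leq B$ as a set of integer tuples $(\eta_1,\dots,\eta_k)$ lying on the torsor, subject to finitely many coprimality conditions and one or more torsor equations coming from the relations among the Cox ring generators. Concretely, I expect the parametrisation to express the coordinates $x_0,\dots,x_6$ as monomials in auxiliary variables $u,v,t,\dots$, so that the height condition $H(x)\leq B$ becomes the region appearing in $\tau_\infty(\widetilde S)$ after rescaling, and the number of $\mathbb{Q}$-points of bounded height equals
\begin{equation*}
    N_{U,H}(B) = \sum_{\text{torsor variables, coprimality, torsor equation}} 1 + (\text{negligible boundary terms}),
\end{equation*}
where the sum is restricted to the scaled box of volume $\sim B(\log B)^3$.

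Next I would carry out the lattice-point count. The coprimality conditions are handled by \Mob inversion, introducing a sum over squarefree moduli with multiplicative weights; summing the resulting main terms over these moduli is exactly what produces the Euler product $\prod_p\tau_p(\widetilde S)$ with the stated local densities $(1-1/p)^4(1+4/p+1/p^2)$, and the constant $1/432$ together with the archimedean integral will fall out of the leading-order volume computation. The torsor equation — which I anticipate is a single equation of the form (monomial) $=$ (monomial) $+$ (monomial), reflecting one of the defining relations in $(\ref{equations})$ such as $x_3x_4-x_0x_5=0$ paired with $x_1x_2+x_0x_3+x_0x_4=0$ — is dealt with by fixing all but two of the variables and counting solutions of a linear (or Pell-type) equation in the remaining two; after summation over the free variables one gets a main term of size $B$ times a polynomial of degree $\rho-1=3$ in $\log B$, i.e. the monic cubic $P$. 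I would organise this as iterated summation, at each stage replacing an inner sum by its main term plus an error, keeping careful track of which summations generate logarithms (there should be exactly three such stages, matching $\rho-1$) and which contribute only convergent factors to the Euler product.

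The main obstacle, and where essentially all the work lies, is obtaining the power-saving error term $O_\varepsilon(B^{7/8+\varepsilon})$ rather than a mere $o$ of the main term. This requires delicate estimates at each summation stage: one cannot simply bound inner sums by their main terms plus $O(1)$, since the accumulated error would swamp the main term. Instead I would need to exploit cancellation — presumably via bounds on exponential sums or on sums of multiplicative functions (e.g.\ Dirichlet hyperbola–type arguments, or estimates for divisor-like functions in arithmetic progressions), and possibly a truncation of the \Mob sum with the tail controlled by a separate argument. The exponent $7/8$ suggests that the critical step is a count of solutions to a quadratic or bilinear torsor equation in a box, where the trivial bound is $B$ and square-root cancellation in one variable gives the saving; balancing the range of the summation variable against the size of the error from the exponential-sum estimate should yield precisely $B^{7/8}$. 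Finally, I would verify that the constants assemble correctly: that the volume of the region defining $\tau_\infty(\widetilde S)$, multiplied by $\alpha(\widetilde S)=1/432$ and the convergent Euler product, reproduces Peyre's constant, thereby confirming the compatibility with Manin's conjecture. A subsidiary obstacle is simply the bookkeeping — del Pezzo surfaces of degree $6$ have a rich torsor with several coprimality conditions, so organising the \Mob inversions and the order of summation so that the logarithmic powers come out exactly right (and the leading coefficient of $P$ is $1$) demands care, though no new ideas.
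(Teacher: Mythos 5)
Your first half — passage to the universal torsor via the Cox ring, a parametrisation turning the height into the region defining $\tau_\infty(\widetilde S)$, \Mob inversion over the coprimality conditions, and counting on the torsor equation by fixing most variables — does match the paper's route (the torsor here is the hypersurface $\eta_2\alpha_1^2+\eta_3\alpha_2+\eta_4\alpha_3=0$ in $\mathbb{A}^7$, and the equation is \emph{linear} in $\alpha_2,\alpha_3$ once $\alpha_1$ is fixed, so that step is a congruence count with an $O(1)$ error per term, totalling $O(B^{2/3+\varepsilon})$ thanks to a sharpened range for $\alpha_1$ extracted from the height conditions). But the part you yourself identify as carrying essentially all the work — the power saving $B^{7/8+\varepsilon}$ — is where your proposal has a genuine gap. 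You guess that $7/8$ arises from square-root cancellation in an exponential-sum treatment of a quadratic or bilinear torsor equation in a box. No such mechanism is available or needed here: no exponential sums enter the argument at all. In the paper the exponent comes from the final summation over the torsor variables $\eta_1,\dots,\eta_4$, which is packaged as a Dirichlet series $D(s)$ factored against $E_1(s)=\zeta(4s-3)\zeta(3s-2)^2\zeta(2s-1)$ times a harmless Euler product; one then applies Perron's formula and shifts the contour to $\re(s)=a$ with $a\in[7/8,1)$, bounding the shifted integral by the convexity estimate $\zeta(\sigma+it)\ll |t|^{(1-\sigma)/3+\varepsilon}$ together with Heath--Brown's fourth-moment bound $\int_0^T|\zeta(1/2+it)|^4\,\mathrm{d}t\ll T\log^4 T$ via H\"older. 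The barrier $a=7/8$ is forced by the factor $\zeta(4s-3)$ reaching the critical line, which is why $7/8$ (and conditionally $3/4$) is the natural exponent; your plan supplies no substitute for this analytic input, and ``balancing an exponential-sum saving'' would not reproduce it.

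A second, smaller omission: when the sum over $\alpha_1$ is replaced by an integral (Euler--Maclaurin), the resulting error is not negligible — it contributes a genuine secondary term of exact order $B$ (the constant $\lambda$ in the paper, alongside the $12/\pi^2\,B$ from an isolated conic where the torsor coordinates vanish). This term must be evaluated asymptotically, not merely bounded, or it would destroy any error term better than $O(B)$; the paper devotes a separate section to making it explicit with error $O(B^{3/4+\varepsilon})$. Your scheme of ``replacing each inner sum by its main term plus an error'' does not anticipate this, and without it the bookkeeping you describe cannot close. The constant verification you outline (Peyre's $\alpha=1/432$, the $p$-adic densities, and the archimedean volume) is fine in spirit, though the paper checks $\tau_p$ cleanly via a good-reduction/blow-up lemma rather than by extracting it from the \Mob-inverted main term.
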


The leading constant in this expression agrees with the prediction
of Peyre \cite{Pey95}, which we shall verify in Section
\ref{subsec:constant}. The calculation of the real density
$\tau_\infty(\widetilde{S})$ poses something of a challenge, since
in our case $S$ is not given by a complete intersection, so standard
methods for calculating this constant do not apply. In Section
\ref{subsec:constant} we also prove a general result which assists
in the calculation of the $p$-adic densities $\tau_p(\widetilde{S})$
(See Lemma~\ref{lem:densities}).

The second theorem of this paper is intimately related to the above
asymptotic formula. We give an explicit expression and meromorphic
continuation of the associated \emph{height zeta function}
\begin{equation}
    Z_{U,H}(s)= \sum_{x \in U(\mathbb{Q})}
    \frac{1}{H(x)^{s}}. \label{HZF}
\end{equation}
To state the result, let $\re(s)
> 0$ and define
\begin{equation}\label{E12}
\begin{split}
        E_1(s+1)&=\zeta(4s+1)\zeta(3s+1)^2\zeta(2s+1),  \\
        E_2(s+1)&=\frac{\zeta(7s + 3)^4\zeta(8s + 3)^2}
                {\zeta(4s + 2)^3\zeta(5s + 2)^2\zeta(6s +
                2)\zeta(10s+4)}.
\end{split}
\end{equation}
It is clear that $E_1(s)$ and $E_2(s)$ have a meromorphic
continuation to the whole complex plane. Also $E_1(s)$ has a single
pole of order $4$ at $s=1$ and $E_2(s)$ is holomorphic on $\re(s) >
3/4$. We then prove the following.
\begin{thm}\label{thm:HZF}
    Let $\varepsilon >0$, then
    $$Z_{U,H}(s)=E_1(s)E_2(s)G_1(s) + \frac{12/\pi^2 +
    2\lambda}{s-1} + G_2(s).$$

    Here, $\lambda \in \mathbb{R}$ is a constant and $G_1(s)$ and $G_2(s)$ are
    complex functions that are holomorphic on $\re(s) > 5/6$ and
    $\re(s) \geq 3/4 + \varepsilon$
    respectively and satisfy $G_1(s) \ll_{\varepsilon} 1$ and $G_2(s) \ll_{\varepsilon} (1 +
    |\im(s)|)$ on these half-planes.

    In particular, $(s-1)^4Z_{U,H}(s)$ has a holomorphic continuation to
    the half-plane $\re(s) >5/6$.
\end{thm}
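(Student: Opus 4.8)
\emph{Proof plan.} The plan is to use the universal torsor method: parametrise the rational points on $U$, turn $Z_{U,H}(s)$ into an explicit multiple Dirichlet series, carry out the summations that produce the zeta factors in \eqref{E12}, and estimate the remainder. First I would use the description of the geometry of $\widetilde S$ in Lemma~\ref{lem:geom} to write down the Cox ring of $\widetilde S$ and the equations of the associated universal torsor, and then set up an explicit height-preserving bijection between $U(\mathbb Q)$ and the set of integral points on the affine variety defined by these torsor equations that satisfy a finite list of coprimality conditions, one for each relation among the negative curves. Under this bijection $H$ becomes the maximum of a fixed finite family of monomials in the torsor coordinates; the seven cubic monomials already visible in the formula for $\tau_\infty(\widetilde S)$ in Theorem~\ref{thm:asym} indicate their shape. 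This part is mechanical, the one genuine subtlety being that $S$ is not a complete intersection, so the torsor equations must be extracted from the Cox ring directly rather than read off \eqref{equations}.

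Next I would substitute this parametrisation into \eqref{HZF} and detect each coprimality condition with a M\"obius function, turning $Z_{U,H}(s)$ into a multiple Dirichlet series in the torsor variables. Those variables that are not bound by the torsor equations are summed first; each such summation contributes one Riemann zeta factor, with argument governed by the weight of the variable in $H$. Collecting the four dominant factors produces $E_1(s)=\zeta(4s-3)\zeta(3s-2)^2\zeta(2s-1)$, which carries the pole of order $4$ at $s=1$. The resulting Euler product is then expanded one further order: peeling off the next batch of zeta factors yields the closed form $E_2(s)$ of \eqref{E12}, leaving an Euler product $G_1(s)$ whose local factors are $1+O(p^{-6\re(s)+4})$, hence holomorphic with $G_1(s)\ll_\varepsilon 1$ on $\re(s)>5/6$. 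One has to verify that the zeta factors so extracted match \eqref{E12}; granting the properties of $E_1$ and $E_2$ recalled just before the theorem, $E_1$ then has its only pole (of order $4$) at $s=1$ and $E_2$ is holomorphic on $\re(s)>3/4$.

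What remains is the sum over the torsor variables that \emph{are} coupled by the torsor equations, weighted by an arithmetic factor built from the above M\"obius functions and by the archimedean conditions encoded in the Mellin kernel; this is the crux of the proof. I would evaluate the relevant inner counts as lattice-point counts in regions cut out by the monomial height inequalities, extract their volume main terms, and re-sum. The leading behaviour contributes a single simple pole at $s=1$, whose residue works out to $12/\pi^2+2\lambda$ for an explicit real constant $\lambda$ (the $12/\pi^2=2/\zeta(2)$ reflecting a coprimality density, $\lambda$ a secondary term), while the accumulated error terms — estimated uniformly in the remaining parameters, then re-summed into their own Dirichlet series and continued — assemble into a function $G_2(s)$ that is holomorphic on $\re(s)\ge 3/4+\varepsilon$ and satisfies $G_2(s)\ll_\varepsilon 1+|\im(s)|$ there. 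Producing \emph{both} the exact secondary residue and an error term valid as far left as $\re(s)=3/4+\varepsilon$ with this vertical growth bound — the latter being what later feeds the Tauberian argument behind Theorem~\ref{thm:asym} — is the main obstacle.

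Assembling the pieces would give
$$Z_{U,H}(s)=E_1(s)E_2(s)G_1(s)+\frac{12/\pi^2+2\lambda}{s-1}+G_2(s)$$
with $G_1,G_2$ as stated. For the final assertion one multiplies by $(s-1)^4$: since $E_1$ has a single pole, of order $4$, at $s=1$, the function $(s-1)^4E_1(s)$ is entire, while $E_2(s)G_1(s)$ is holomorphic on $\re(s)>5/6$; the middle term becomes the polynomial $(12/\pi^2+2\lambda)(s-1)^3$; and $(s-1)^4G_2(s)$ is holomorphic on $\re(s)\ge 3/4+\varepsilon\supseteq\{\re(s)>5/6\}$. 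Hence $(s-1)^4Z_{U,H}(s)$ extends holomorphically to $\re(s)>5/6$.
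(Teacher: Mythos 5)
Your outline---torsor parametrisation, M\"obius inversion, extraction of $E_1E_2$ from an Euler product with a remainder $G_1$, plus a simple pole and an error term $G_2$---is the paper's strategy in broad terms (the paper first proves the sharp counting formula (\ref{N(B)}) in $B$-space and only then passes to $Z_{U,H}$ by a Mellin transform; doing the bookkeeping directly at the level of Dirichlet series is a cosmetic reordering). But the two ingredients that actually carry the theorem are missing. First, the simple pole. The $12/\pi^2$ is not a coprimality density inside the coupled-variable sum: it is the separate count of rational points on the conic $A_1$, i.e.\ the points of $U$ with a vanishing coordinate ($\alpha_1\alpha_2\alpha_3=0$ on the torsor), which the section of Lemma~\ref{lem:section} does not cover---your claimed bijection for all of $U(\mathbb{Q})$ silently includes them, and they must be split off and counted on their own ($N_{A_1}(B)=\tfrac{12}{\pi^2}B+O(B^{1/2})$). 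Second, and more seriously, $\lambda$: it arises from the fractional-part (Euler--Maclaurin) corrections $E(\bfeta,k_1,B)$ in the summation over $\alpha_1$ (Lemma~\ref{lem:F2E}). These corrections genuinely produce a term of order $B$, the same order as the pole, so they cannot simply be ``assembled into $G_2$''; one must extract an explicit linear term from them. The paper does this in Section~\ref{subsec:error} by summing over $\eta_2$ \emph{first}, using the multiplicative machinery of Lemmas~\ref{lem:nu} and~\ref{lem:nu_2} and the key observation that $\widetilde{E}(u\widetilde{X_5})$ is independent of $B$. Your plan names exactly this point as the main obstacle but offers no mechanism for it, and without it neither the residue $12/\pi^2+2\lambda$ nor the continuation of $G_2$ to $\re(s)\geq 3/4+\varepsilon$ with growth $O(1+|\im(s)|)$ is obtained.

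There is also a structural misattribution concerning $G_1$ and the boundary $\re(s)>5/6$. In the paper $G_1=G_{1,1}G_{1,2}$, where the leftover Euler product $G_{1,2}$ is holomorphic and bounded already on $\re(s)\geq 2/3+\varepsilon$ (Lemma~\ref{lem:G12}); since $E_2$ already contains $\zeta(6s-4)^{-1}$, no local obstruction of size $p^{-6\sigma+4}$ remains, contrary to your accounting. The barrier at $5/6$ is archimedean: it comes from $G_{1,1}(s)=6s\int_0^1u^{3(s-1)}F_2(u)\,\mathrm{d}u$ with $F_2(u)\ll u^{-1/2}$. Your plan leaves the archimedean conditions inside the coupled-variable sum and treats $G_1$ as a pure Euler product, so the factorisation $Z_1(s)=D(s-2/3)\,G_{1,1}(s)$---which rests on arranging the height conditions so that the archimedean density depends on $\bfeta$ only through $X_5=(\eta_1^4\eta_2^2\eta_3^3\eta_4^3/B)^{1/3}$, as in (\ref{height1})---is not justified, and the stated domain of $G_1$ is assigned to the wrong source. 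These are the places where the proposal needs real content rather than reorganisation; the final deduction about $(s-1)^4Z_{U,H}(s)$ is fine once the decomposition is in hand.
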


Expressions for $G_1(s)$ and $G_2(s)$ can be found in
(\ref{G2}),(\ref{G11}),(\ref{G1}) and Lemma~\ref{lem:G12}. Here
$E_1(s)E_2(s)G_1(s)$ and $G_2(s)$ correspond to the main term and
error term in the counting argument respectively and $12/\pi^2$
corresponds to an isolated conic in the surface. We only prove the
existence of $\lambda$, however a keen reader can build an explicit
(and complicated) expression for it using the work in Section
\ref{subsec:error}. We shall only say that $\lambda$ arises
naturally in the proof as an error term created by approximating a
sum by an integral and has appeared in some form in other works
(e.g. \cite{BB07}), however it is currently severely lacking in
geometric interpretation.

We will show in Lemma~\ref{lem:geom} that the surface $S$ is an
equivariant compactification of $\mathbb{G}_a^2$, so that the work
of Chambert-Loir and Tschinkel \cite{CT02} applies, where they have
already achieved an analytic continuation of the associated height
zeta function and an asymptotic formula for the counting problem.
However, our results are stronger for a number of reasons. Firstly,
we do not use the fact that $S$ is an equivariant compactification
of $\mathbb{G}_a^2$, so our methods seem applicable to more general
situations. We also get an explicit expression for the height zeta
function in terms of the Riemann zeta function, which gives a better
insight into how these zeta functions look and behave for a concrete
example. Furthermore, whereas \cite{CT02} only gives a holomorphic
continuation of $(s-1)^4Z_{U,H}(s)$ to an unspecified half-plane
$\re(s)>1-\delta$, we are able to show that $\delta=1/6$ is
acceptable, and that $\delta=1/4$ appears to be a natural boundary
under the assumption of the Riemann hypothesis. As a consequence, we
get an explicit (and stronger) error term in our asymptotic formula.

The first important step in the proof of Theorem~\ref{thm:HZF} is to
relate the counting problem on $S$ to that of counting integral
points on the associated \emph{universal torsor}. Universal torsors
were introduced by Colliot-Th\'{e}l\`{e}ne and Sansuc in
\cite{CTS87} to aid the study of the Hasse principle and weak
approximation. However, Salberger \cite{Sal98} showed that they
could be a valuable tool in counting problems on varieties. In
general a variety may have more than one universal torsor, however
in our case there is only one universal torsor (see Section
\ref{subsec:torsor} for further details). It can be visualised as a
certain open subset $\mathcal{T}$ of the affine variety in
$\mathbb{A}^7$ given by the following equation
$$\eta_2\alpha_1^2+ \eta_3\alpha_2 +\eta_4\alpha_3 = 0.$$
For our purposes, the universal torsor is a variety with a
surjective morphism $\pi:\mathcal{T} \to S$ defined over
$\mathbb{Q}$, and an action of $\mathbb{G}_m^4$ on $\mathcal{T}$
which preserves the fibres of $\pi$ and acts freely and transitively
on them. Exact definitions can be found in the above references, and
a concrete realisation of the universal torsor can be found in Lemma
\ref{lem:torsor}.

To relate the two counting problems we find a suitable set-theoretic
section of the map $\pi$, which corresponds to requiring that we
count certain integral points satisfying the universal torsor
equation and certain coprimality conditions. Previous methods for
achieving this in similar problems have been the ``elementary
method" \cite[Section 4]{BB07} and the ``blow-up method"
\cite[Section 4]{DT07}. The first method involves looking for
divisibility relations given by the equations of the surface, and
then performing a lengthy chain of substitutions to pull out any
highest common factors among the variables. The second method
involves knowing which exact points of $\mathbb{P}^2$ are blown-up
to create your surface, and using these to guide you through various
algebraic manipulations.

Here we present a new method, which uses the action of
$\mathbb{G}_m^4$ on the universal torsor. Essentially, we use this
action to ``rescale" each point in each fibre to a unique point.
Since the universal torsor (if it exists) of a more general variety
always has a free and transitive group action on its fibres, this
method is more likely to generalise to other situations than the
previously two mentioned methods. See Lemma~\ref{lem:section} for
more details.

\textbf{Notation}: To simplify notation, throughout this paper
$\varepsilon$ is any positive real number which all implied
constants are allowed to depend upon. We use the common practice
that $\varepsilon$ can take different values at different points of
the argument.

\textbf{Acknowledgments}: The author is funded by an EPSRC student
scholarship and is grateful for the help and support of Tim
Browning, and for useful conversations with Per Salberger, Emmanuel
Peyre, Ulrich Derenthal, Tomer Schlank, Tony Scholl and R\'{e}gis de
la Bret\`{e}che. We are also indebted to the referee for their
careful reading of the preliminary manuscript and many useful
comments.

\section{Preliminary Steps}
\subsection{Some Geometry}

The underlying geometry of the surface $S$ is well understood, and
we gather some facts about it in the following lemma, which also
helps to fix some notation.

\begin{lem}\label{lem:geom}
    Let $S$ be given by (\ref{equations}). Then the following holds.
    \begin{itemize}
        \item $S$ is a split singular del Pezzo surface of degree $6$ given by
        its anticanonical embedding.
        \item It contains the singular point $(1:0:0:0:0:0:0)$ of type
            $\mathbf{A}_2$.
        \item The only lines in $S$ are given by
            \begin{align*}
                &L_1: x_1=x_3=x_4=x_5=x_6=0, \\
                &L_2: x_2=x_3=x_4=x_5=x_6=0.
            \end{align*}
            In particular $U=S\setminus\{L_1 \cup
            L_2\}=S\setminus\{x_5=0\}$.
        \item $S$ is the closure of $\mathbb{P}^2$ under the rational
        map $\varphi:\mathbb{P}^2 \dashrightarrow S$ given by
        \begin{align*}
            &\varphi(x_3:x_5:x_6) =(\varphi_0(x_3,x_5,x_6):\cdots:\varphi_6(x_3,x_5,x_6))= \\
            &(-x_3^2x_5-x_3 x_6^2:x_3x_5x_6:-x_3x_5x_6-x_6^3:
            x_3x_5^2:-x_3x_5^2-x_5x_6^2:x_5^3:x_5^2x_6),
        \end{align*}
        where
        $\Gamma(\mathbb{P}^2,\mathcal{O}_{\mathbb{P}^2}(1))=\langle x_3,x_5,x_6 \rangle$.
        \item The group law on $\varphi(\mathbb{G}_a^2)= U$
        extends to an action on $S$ by translation. i.e. $S$ is
        an equivariant compactification of $\mathbb{G}_a^2$.
    \end{itemize}
\end{lem}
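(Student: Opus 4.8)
The plan is to verify each of the five bullet points of Lemma~\ref{lem:geom} in turn, with the bulk of the work being the construction and analysis of the rational parametrisation $\varphi$, from which everything else follows. First I would identify the map $\varphi:\mathbb{P}^2 \dashrightarrow \mathbb{P}^6$ given by the listed sextic forms $\varphi_0,\dots,\varphi_6$ and check by direct substitution that each of the nine equations in (\ref{equations}) is satisfied identically; this is a routine (if tedious) polynomial verification showing $\varphi(\mathbb{P}^2) \subseteq S$. One then checks that $\varphi$ is birational onto its image, for instance by exhibiting a rational inverse: away from the relevant loci the ratios $x_5^3 : x_5^2 x_6 : x_3 x_5^2$ recover $(x_3:x_5:x_6)$ (so $(x_3:x_5:x_6) = (x_4 : x_6 : x_5)$ after clearing, up to checking the bookkeeping against the specific forms). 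Since $S$ is irreducible of dimension $2$ (being cut out by the given equations, whose zero set we identify with the closure of the image), it follows that $S = \overline{\varphi(\mathbb{P}^2)}$, giving the fourth bullet.

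Next I would pin down the indeterminacy locus of $\varphi$ and the base points being blown up: the forms $\varphi_i$ all vanish when $x_5 = 0$ and $x_3 x_6^2 = x_6^3 = 0$, i.e. the common zero locus consists of the point $(1:0:0)$ together with possible infinitely-near points, and $\varphi$ factors through the blow-up of $\mathbb{P}^2$ at a single point taken with multiplicity, producing an $\mathbf{A}_2$ configuration of $(-2)$-curves collapsing to one singular point. Evaluating $\varphi$ at, say, $(1:0:0)$ gives the point $(1:0:0:0:0:0:0)$, which I would then confirm is the unique singular point of $S$ and is of type $\mathbf{A}_2$ — either by computing the Jacobian of (\ref{equations}) and analysing the local equation, or by appealing to the classification of degree-$6$ del Pezzo surfaces together with the structure of the minimal desingularisation $\widetilde{S}$ obtained from $\mathbb{P}^2$ by blowing up; this yields the second bullet. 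For the first bullet, degree $6$ and the anticanonical nature of the embedding follow from $\varphi$ being given by a linear system contained in $|-K_{\widetilde{S}}|$ on the (weak) del Pezzo $\widetilde{S}$ of degree $6$ (three blow-ups of $\mathbb{P}^2$, here in an $\mathbf{A}_2$-special position), and splitness follows since everything is defined over $\mathbb{Q}$ and $\operatorname{Pic}(\widetilde{S}_{\overline{\mathbb{Q}}})$ is generated by classes already rational; this is where I would cite \cite{CT88} and \cite{Man86} for the standard facts.

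For the third bullet I would argue that the lines on $S$ correspond, under $\pi:\widetilde{S}\to S$, to the $(-1)$-curves on $\widetilde{S}$ not contracted by $\pi$ (the anticanonical embedding sends $(-1)$-curves to lines), plus possibly the images of chains of $(-2)$-curves; a count using the degree-$6$ $\mathbf{A}_2$-configuration shows there are exactly two such $(-1)$-curves, which must be $L_1$ and $L_2$. One then checks directly that the two displayed linear subspaces $L_1, L_2$ do lie on $S$ (immediate substitution into (\ref{equations})) and that they are the only lines, e.g. by checking that any line on $S$ must have $x_5 \equiv 0$ on it — indeed on $S$ the relation $x_5 = 0$ cuts out precisely $L_1 \cup L_2$ by inspection of the equations — which simultaneously gives the identity $U = S\setminus\{x_5 = 0\}$. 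Finally, for the fifth bullet, the additive group $\mathbb{G}_a^2 = \operatorname{Spec}\mathbb{Q}[u,v]$ sits inside $\mathbb{P}^2$ as an affine chart, and $\varphi$ restricted to it maps isomorphically onto $U$; translation on $\mathbb{G}_a^2$ then transports to a birational action on $S$, and one checks it extends to a morphic action on all of $S$ by verifying it extends over the boundary $L_1 \cup L_2$ and the singular point, which is a short local computation. The main obstacle is really bookkeeping: organising the direct substitution verifications that $\varphi(\mathbb{P}^2)\subseteq S$ and that the listed equations cut out exactly this surface (rather than something larger), and keeping the identification of the $\mathbf{A}_2$-configuration on $\widetilde{S}$ consistent with the explicit coordinates throughout.
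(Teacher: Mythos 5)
Your plan is correct and follows essentially the same route as the paper: use the cubic parametrisation $\varphi$ to identify $U\cong\mathbb{G}_a^2$, resolve the $\mathbf{A}_2$ singularity by blow-ups and use adjunction to see that the embedding is anticanonical of degree $6$, appeal to the classification of singular del Pezzo surfaces for the fact that there are exactly two lines (both over $\mathbb{Q}$, hence splitness), and treat the group-law extension as a routine check (the paper in fact omits this verification, citing \cite{DL10}). Two small slips worth fixing but not affecting the approach: the rational inverse of $\varphi$ is simply projection onto the coordinates $x_3,x_5,x_6$ of $\mathbb{P}^6$ (not the formula you wrote), and $\varphi$ cannot be \emph{evaluated} at $(1:0:0)$ since that is exactly its base point --- the singular point $(1:0:0:0:0:0:0)$ arises as the image of the contracted $(-2)$-curves on the resolution (or is located via the Jacobian of (\ref{equations})), not as a value of $\varphi$.
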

\begin{proof}
    First, it is clear that $\varphi$ defines
    an isomorphism $U \cong \mathbb{G}_a^2$. Hence the divisor class
    group of $S$ is generated by the $L_1$ and $L_2$, as $\Pic(\mathbb{G}_a^2)=0$.
    It is simple enough to check that the induced group
    law on $U$ extends to an action on all of $S$. However as mentioned in the introduction
    we will not use this fact in this paper, so the proof is
    omitted and can be found in \cite{DL10}.

    Resolving the singularity explicitly via blow-ups
    creates two exceptional curves $E_1$ and $E_2$ on the minimal desingularisation $\widetilde{S}$.
    The singularity is of type $\mathbf{A}_2$ and
    $\Pic(\widetilde{S}) = \langle E_1,E_2,E_3,E_4 \rangle
    \cong \mathbb{Z}^4$, where $E_3$ and $E_4$
    are the strict transforms of $L_1$ and $L_2$ respectively.
    Now, one can use the adjunction formula \cite[Ch. V, Prop. 1.5]{Har77} to show that
    $-K_{\widetilde{S}}=4E_1 + 2E_2 + 3E_3 +
    3E_4$, which proves that $K_{\widetilde{S}}^2=6$. Also,
    one can show that the pull back of the hyperplane section on $S$
    is $-K_{\widetilde{S}}$, thus proving that $S$ is a singular del
    Pezzo surface of degree $6$ given by its anticanonical
    embedding.

    Finally, we note that the $\mathbf{A}_2$ singular del Pezzo surface of degree $6$
    contains only two lines by the classification of singular del Pezzo
    surfaces \cite[Prop. 8.3]{CT88}. These are both defined over $\mathbb{Q}$,
    so the surface is indeed split.
\end{proof}

We also include the extended Dynkin diagram of $\widetilde{S}$ in
Figure \ref{Dyn:Diag}, which records the intersection behaviour of
relevant curves on $\widetilde{S}$. This can be derived from the
proof of Lemma~\ref{lem:geom}, or found in \cite[Sec. 5]{Der06}.
Here $E_1,E_2,E_3$ and $E_4$ are as in the proof of Lemma
\ref{lem:geom} and
\begin{align*}
    &A_1:  S \cap \{x_1=x_2=x_6=0\} ,
    \quad A_2: S \cap\{x_0=x_1=x_3=0\},\\
    &A_3: S\cap\{x_0=x_2=x_4=0\}.
\end{align*}
These rational curves correspond to generators of the nef cone and
will be needed in our work in section \ref{subsec:torsor}.

\begin{figure}[hbt]
\centerline{
     \xymatrix{A_2 \ar@{-}[rr] \ar@{-}[dr] \ar@{-}[dd]&& E_3 \ar@{-}[dr]   \\
     & A_1 \ar@{-}[dl] \ar@{-}[r] & E_2 \ar@{-}[r] & E_1\\
     A_3 \ar@{-}[rr] &&  E_4 \ar@{-}[ur]}}
     \caption{The extended Dynkin diagram for $\widetilde{S}$.}
    \label{Dyn:Diag}
\end{figure}
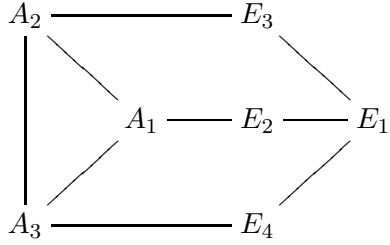

\subsection{Calculating Peyre's Constant}\label{subsec:constant}
In this section we shall verify that the constant achieved in the
asymptotic formula for Theorem~\ref{thm:asym} is in agreement with
the conjectural expression as formulated by Peyre \cite[Sec.
2]{Pey95}. Since our surface is split, it is birational to
$\mathbb{P}^2$ over $\mathbb{Q}$. So the constant is equal to the
following three factors multiplied together:
\begin{itemize}
    \item The volume $\alpha(\widetilde{S})$ of a certain polytope
    in the cone of effective divisors,
    \item The real density $\tau_\infty(\widetilde{S})$,
    \item The $p$-adic densities $\prod_{p } \tau_p(\widetilde{S})$.
\end{itemize}
By the work of \cite[Table 3]{Der07} we know that
$$\alpha(\widetilde{S}) = \frac{1}{432},$$
which is in agreement with the constant $\alpha(\widetilde{S})$ in
Theorem~\ref{thm:asym}.

We shall now calculate the real density, which corresponds to the
measure of some region, where we consider
$\widetilde{S}(\mathbb{R})$ as a real analytic manifold. Since
removing a codimension one subset does not change this volume, we
may consider the measure of the coordinate chart $U=S\setminus \{x_5
= 0\}$, with local coordinates $x_3$ and $x_6$. By Lemma
\ref{lem:geom}, this is just a reflection of the fact that our
surface is a compactification of $\mathbb{A}^2$ with $\varphi$ as a
local homeomorphism. Since $S$ is given by its anticanonical
embedding, we have by \cite[Section 2.2.1]{Pey95}

\begin{align*}
    \tau_\infty(\widetilde{S})
    & =\int_{\mathbb{R}^2}\frac{\mathrm{d}x_3 \mathrm{d}x_6}
    {\max(|x_3^2 +x_3 x_6^2|,|x_3x_6|,|x_3x_6+x_6^3|,
    |x_3|,|x_3+x_6^2|,1,|x_6|)} \\
    & =\int_{\mathbb{R}^2}
    \int_{x_5\geq \{\max(|x_3^2 +x_3 x_6^2|,|x_3x_6|,|x_3x_6+x_6^3|,
    |x_3|,|x_3+x_6^2|,1,|x_6|)}\frac{\mathrm{d}x_3 \mathrm{d}x_5 \mathrm{d}x_6}{x_5^2}
    \\
    & =3\int_{\{t,v,u \in \mathbb{R}:0<|t(ut+v^2)|,|uvt|,
    |uvt+v^3|,|u^2t|,|u^2t+uv^2|,u^3,|u^2v|\leq1\}} \mathrm{d}u\mathrm{d}v\mathrm{d}t,
\end{align*}
where we have used the change of variables
$$x_3=t/u,x_5=u^{-3},x_6=v/u.$$
Then noticing that we have the obvious automorphism $v \mapsto -v$
in the above integral, this gives the required expression for the
constant in Theorem~\ref{thm:asym}. We note that more generally, the
real density of any anticanonically embedded del Pezzo surface can
be calculated similarly by knowing which linear system of cubics in
$\mathbb{P}^2$ determines the given embedding.

The calculation of the $p$-adic densities for similar problems (see
\cite{BB07} for example) have normally involved a ``hands-on''
approach to point counting modulo $p$ for each prime $p$. Here we
opt for a more general method, which applies to any surface that is
the blow-up of $\mathbb{P}^2$ at a sequence of (possibly infinitely
near) rational points. First we recall some definitions.

\begin{df}
    Let $V$ be a non-singular projective variety defined over $\mathbb{Q}$. A \emph{model}
    for $V$ over $\mathbb{Z}$ is a projective morphism of schemes $\mathcal{V} \to
    \Spec{\mathbb{Z}}$,
    whose generic fibre is isomorphic to $V$. For each prime $p$, we denote by
    $\mathcal{V}_p=\mathcal{V}\times_{\Spec\mathbb{Z}}\Spec{\mathbb{F}_p}$ the
    reduction of $\mathcal{V}$ modulo $p$.

    We say that $V$ has \emph{everywhere good reduction} if there exists a
    model whose structure morphism is a \emph{smooth morphism}
    (i.e. $\mathcal{V}_p$ is a non-singular variety for each prime $p$).
\end{df}

\begin{lem}\label{lem:reduction}
    Let $S$ be a surface over $\mathbb{Q}$ with everywhere good reduction, and
    $\pi:\widetilde{S} \to S$ the blow-up of $S$ at a rational point
    $P$. Then $\widetilde{S}$ also has everywhere good reduction.
\end{lem}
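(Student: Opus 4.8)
The plan is to construct a smooth projective model of $\widetilde{S}$ over $\mathbb{Z}$ by blowing up a smooth model of $S$ along the schematic closure of $P$. Since $S$ has everywhere good reduction, there is by definition a smooth projective morphism $\mathcal{S}\to\Spec\mathbb{Z}$ with generic fibre $S$. As this morphism is proper and $\Spec\mathbb{Z}$ is a Dedekind scheme, the valuative criterion of properness, applied over each local ring $\mathbb{Z}_{(p)}$, shows that $P\in S(\mathbb{Q})$ extends uniquely to a section $\sigma:\Spec\mathbb{Z}\to\mathcal{S}$, which is moreover a closed immersion because $\mathcal{S}\to\Spec\mathbb{Z}$ is separated. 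Writing $\mathcal{P}=\sigma(\Spec\mathbb{Z})$ for its image, a closed subscheme of $\mathcal{S}$ isomorphic to $\Spec\mathbb{Z}$, I would set $\widetilde{\mathcal{S}}=\mathrm{Bl}_{\mathcal{P}}\,\mathcal{S}$. The composite $\widetilde{\mathcal{S}}\to\mathcal{S}\to\Spec\mathbb{Z}$ is then projective, and the task reduces to showing that it is smooth with generic fibre $\widetilde{S}$.

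The identification of the generic fibre is immediate from the fact that forming a blow-up commutes with flat base change: pulling back along $\mathcal{S}\times_{\Spec\mathbb{Z}}\Spec\mathbb{Q}\to\mathcal{S}$ gives that the generic fibre of $\widetilde{\mathcal{S}}$ is $\mathrm{Bl}_{\mathcal{P}_\mathbb{Q}}\,S=\mathrm{Bl}_{P}\,S=\widetilde{S}$, so $\widetilde{\mathcal{S}}$ is indeed a model of $\widetilde{S}$. For smoothness, the key point is that $\mathcal{P}$, being a section of the smooth morphism $\mathcal{S}\to\Spec\mathbb{Z}$ of relative dimension $2$, is regularly immersed of codimension $2$ with locally free conormal sheaf $\sigma^{*}\Omega^{1}_{\mathcal{S}/\mathbb{Z}}$; equivalently, \'etale-locally on $\mathcal{S}$ near $\mathcal{P}$ the pair $(\mathcal{S},\mathcal{P})$ looks like $(\mathbb{A}^{2}_{\mathbb{Z}},Z)$ with $Z=V(x,y)$ the zero section. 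Concretely, near a point of $\mathcal{P}$ choose $f,g$ on an open $V\subseteq\mathcal{S}$ vanishing on $\mathcal{P}$ and with $df,dg$ a basis of $\Omega^{1}_{\mathcal{S}/\mathbb{Z}}$ along $\mathcal{P}$; then $h=(f,g):V\to\mathbb{A}^{2}_{\mathbb{Z}}$ is \'etale near $\mathcal{P}\cap V$ and, after shrinking $V$, the scheme-theoretic preimage $h^{-1}(Z)$ equals $\mathcal{P}\cap V$. Since blow-ups commute with the flat (here \'etale) base change $h$, one gets $\mathrm{Bl}_{\mathcal{P}\cap V}\,V\cong V\times_{\mathbb{A}^{2}_{\mathbb{Z}}}\mathrm{Bl}_{Z}\,\mathbb{A}^{2}_{\mathbb{Z}}$. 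The blow-up $\mathrm{Bl}_{Z}\,\mathbb{A}^{2}_{\mathbb{Z}}$ is covered by two affine charts each isomorphic to $\mathbb{A}^{2}_{\mathbb{Z}}$, hence is smooth over $\mathbb{Z}$; therefore its \'etale base change along $h$ is smooth over $\mathbb{Z}$ too, so $\widetilde{\mathcal{S}}$ is smooth over $\mathbb{Z}$ near the exceptional divisor. Away from $\mathcal{P}$ the blow-up morphism is an isomorphism onto $\mathcal{S}\setminus\mathcal{P}$, which is smooth over $\mathbb{Z}$. Hence $\widetilde{\mathcal{S}}\to\Spec\mathbb{Z}$ is smooth everywhere, and being projective with generic fibre $\widetilde{S}$ it shows that $\widetilde{S}$ has everywhere good reduction.

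The one delicate point is this smoothness assertion, which amounts to saying that the fibres of $\widetilde{\mathcal{S}}$ are the blow-ups of the smooth surfaces $\mathcal{S}_{p}$ at the corresponding $\mathbb{F}_{p}$-rational points and so are again smooth. This works precisely because the centre $\mathcal{P}$ is as nondegenerate as it can be --- regularly immersed of constant codimension $2$ and flat over $\mathbb{Z}$, equivalently \'etale-locally the zero section of $\mathbb{A}^{2}_{\mathbb{Z}}$ --- which is exactly what being a section of a smooth morphism guarantees. It is worth stressing that knowing $\widetilde{\mathcal{S}}$ to be regular would not be enough: a regular scheme flat over $\mathbb{Z}$ may still have singular fibres (as $\Spec\mathbb{Z}[x]/(x^{2}-p)$ illustrates), so one really must compute the fibres of the blow-up themselves.
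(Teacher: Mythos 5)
Your proposal is correct and follows essentially the same route as the paper: extend $P$ to an integral section $\mathcal{P}$ of the smooth projective model $\mathcal{S}$, blow up $\mathcal{S}$ along $\mathcal{P}$, and observe that the result is a smooth projective model of $\widetilde{S}$. The only difference is that you carefully justify, via \'etale-local comparison with the zero section of $\mathbb{A}^2_{\mathbb{Z}}$, the smoothness of the fibres that the paper dispatches with ``it is clear that $\widetilde{\mathcal{S}}_p$ is simply the blow-up of $\mathcal{S}_p$ at a smooth $\mathbb{F}_p$-point.''
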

\begin{proof}
    Let $\mathcal{S}$ be the model of $S$ with everywhere good reduction. Since $\mathcal{S}$ is projective, the rational point $P$
    extends uniquely to an integral point $\mathcal{P}$ of $\mathcal{S}$.
    Then the scheme $\widetilde{\mathcal{S}}$, which is defined to be
    the blow-up of $\mathcal{S}$ at $\mathcal{P}$, is a model for
    $\widetilde{S}$. For every prime $p$ it is clear that $\widetilde{\mathcal{S}}_p$  is simply
    the blow-up of $\mathcal{S}_p$ at a smooth $\mathbb{F}_p$-point,
    so $\widetilde{\mathcal{S}}$ also has everywhere good
    reduction.
\end{proof}

Now let $S,\mathcal{S},\widetilde{S}$ and $\widetilde{\mathcal{S}}$
be as in Lemma~\ref{lem:reduction}. Then it is clear that for every
prime $p$ we have $\#\widetilde{\mathcal{S}}_p(\mathbb{F}_p)
=\#\mathcal{S}_p(\mathbb{F}_p) + p$, since blowing up a smooth
$\mathbb{F}_p$-point replaces one $\mathbb{F}_p$-point by a copy of
$\mathbb{P}^1_{\mathbb{F}_p}$, which has $p+1$
$\mathbb{F}_p$-points. We can use this simple fact to prove the
following.

\begin{lem}\label{lem:densities}
    Let $S$ be a surface over $\mathbb{Q}$ which is the blow-up of $\mathbb{P}^2$ at
    $r$ (possibly infinitely near) rational points. Then for every
    prime $p$ the local density at $p$ is
    $$\tau_p(S)= \left(1 - \frac{1}{p}\right)^{r+1}\left(1 +
    \frac{r+1}{p} + \frac{1}{p^2}\right).$$
\end{lem}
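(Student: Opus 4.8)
The strategy is to reduce the computation of $\tau_p(S)$ to a point count on a good-reduction model. Recall the definition of the $p$-adic density for an anticanonically embedded (or more generally Fano) surface $S$ over $\mathbb{Q}_p$: when $S$ has a smooth model $\mathcal{S}$ over $\mathbb{Z}_p$, Weil's formula gives
\begin{equation*}
\tau_p(S) = \left(1-\frac{1}{p}\right)^{\rank\Pic(S_{\overline{\mathbb{Q}}})}\,\frac{\#\mathcal{S}_p(\mathbb{F}_p)}{p^{\dim S}}.
\end{equation*}
Since $S$ is the blow-up of $\mathbb{P}^2$ at $r$ (possibly infinitely near) rational points, iterating Lemma~\ref{lem:reduction} starting from the (obviously good) standard model of $\mathbb{P}^2$ shows that $S$ has everywhere good reduction; fix such a model $\mathcal{S}$. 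Applying the counting identity $\#\widetilde{\mathcal{S}}_p(\mathbb{F}_p) = \#\mathcal{S}_p(\mathbb{F}_p) + p$ once for each of the $r$ blow-ups (each blow-up, whether at a point of $\mathbb{P}^2$ or at an infinitely near point lying on an exceptional $\mathbb{P}^1$ of the previous stage, is still the blow-up of the reduced surface at a \emph{smooth} $\mathbb{F}_p$-point, so the identity applies) yields
\begin{equation*}
\#\mathcal{S}_p(\mathbb{F}_p) = \#\mathbb{P}^2(\mathbb{F}_p) + r p = p^2 + p + 1 + rp = p^2 + (r+1)p + 1.
\end{equation*}

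**Assembling the formula.** It remains only to plug this into Weil's formula with $\dim S = 2$ and $\rank\Pic(S_{\overline{\mathbb{Q}}}) = r+1$ (the blow-up of $\mathbb{P}^2$ at $r$ points, even infinitely near ones, has geometric Picard rank $r+1$):
\begin{equation*}
\tau_p(S) = \left(1-\frac{1}{p}\right)^{r+1}\frac{p^2 + (r+1)p + 1}{p^2}
= \left(1-\frac{1}{p}\right)^{r+1}\left(1 + \frac{r+1}{p} + \frac{1}{p^2}\right),
\end{equation*}
as claimed. In the application to $\widetilde{S}$ we have $r=3$ (one resolves the $\mathbf{A}_2$ singularity with two infinitely near blow-ups and strips out the appropriate further point, or more directly $\rank\Pic(\widetilde{S})=4$ so $r+1=4$), recovering the factor $(1-1/p)^4(1+4/p+1/p^2)$ of Theorem~\ref{thm:asym}.

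**Main obstacle.** The only genuinely delicate point is the careful handling of infinitely near points. One must check that at each stage the center of the blow-up, when reduced mod $p$, remains a smooth $\mathbb{F}_p$-point of the reduced surface — this is where the good-reduction hypothesis propagated by Lemma~\ref{lem:reduction} is essential — and that two distinct $\mathbb{Q}$-points (possibly infinitely near) do not collide modulo $p$; but since each blow-up is performed \emph{successively} on the previous surface rather than simultaneously on $\mathbb{P}^2$, there is no collision to worry about, and the count $+p$ per blow-up is exact for every prime $p$ with no exceptional primes. I would also remark in passing that this explains why the Euler product $\prod_p\tau_p(S)$ converges and matches Peyre's Tamagawa-type constant: the local factors differ from $1$ by $O(1/p^2)$ after extracting the convergence-factor $\zeta_p(1)^{-(r+1)}$, exactly as required.
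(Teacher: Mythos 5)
Your proposal is correct and follows essentially the same route as the paper: everywhere good reduction obtained from Lemma~\ref{lem:reduction} starting with $\mathbb{P}^2$, the point count $\#\mathcal{S}_p(\mathbb{F}_p)=p^2+(r+1)p+1$ coming from the ``$+p$ per blow-up'' identity, and Peyre's formula with convergence factors $(1-1/p)^{r+1}$ from the split Picard lattice of rank $r+1$. Your explicit discussion of the infinitely near points is a welcome elaboration of what the paper leaves implicit, but it is not a different argument.
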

\begin{proof}
    We begin by noting that the definition of $\tau_p(S)$ is independent of the choice
    of model, as pointed out in \cite[Def. 2.2]{Pey95}. Since $\mathbb{P}^2$ has everywhere
    good reduction, then so does $S$ by Lemma~\ref{lem:reduction}.
    Let $\mathcal{S}$ be the corresponding model, then
    $\#\mathcal{S}_p(\mathbb{F}_p)=1 + (r+1)p + p^2$ since $\#\mathbb{P}^2(\mathbb{F}_p)=1 + p + p^2$.  It is also
    clear that $\Pic(\mathcal{S}_p) \cong \mathbb{Z}^{r+1}$ with
    trivial galois action, hence the associated Artin L-function is
    $\zeta(s)^{r+1}$. This gives the correct ``convergence factors"
    and the result follows.
\end{proof}

Applying Lemma~\ref{lem:densities} to $\widetilde{S}$ (which is
split by Lemma~\ref{lem:geom}) with $r=3$, we deduce the result.

\section{The Proof}
\subsection{Passage to the Universal Torsor}\label{subsec:torsor}
As mentioned in the introduction, the first step in the proof is
transferring the problem of counting rational points on the surface
$S$, to counting integral points on the corresponding universal
torsor $\mathcal{T}$.

A variety may in general have more than one universal torsor,
however in our case there is only one. Indeed if a smooth projective
variety $V$ over a field $k$ has a universal torsor, then the set of
isomorphism classes of universal torsors is a principal homogeneous
space under $H^1(k,T)$, where $T=\Hom(\Pic(V),\mathbb{G}_m)$ is the
N\'{e}ron-Severi torus. However in our case $T=\mathbb{G}_m^4$ since
$\Pic(\widetilde{S})\cong\mathbb{Z}^4$ with trivial galois action,
and also $H^1(\mathbb{Q},\mathbb{G}_m^4)=0$ by Hilbert's theorem
$90$. Hence $\widetilde{S}$ can have at most one universal torsor.
However, the existence of a rational point on $\widetilde{S}$
implies the existence of a universal torsor. These facts (and more)
can be found in \cite[Sec. 2.3]{Sko01}. The following lemma gives us
a concrete description of the universal torsor.

\begin{lem} \label{lem:torsor}
    Let
    $$\Cox(\widetilde{S})=\bigoplus_{(n_1,n_2,n_3,n_4) \in \mathbb{Z}^4}
            H^0(\widetilde{S},\mathcal{O}(E_1)^{\otimes n_1}\otimes \cdots \otimes
            \mathcal{O}(E_4)^{\otimes n_4})$$
    be the Cox ring of $\widetilde{S}$. Then
    \begin{itemize}
    \item $\Cox(\widetilde{S}) \cong
        \mathbb{Q}[\alpha_1,\alpha_2,\alpha_3,\eta_1,\eta_2,\eta_3,\eta_4]
        /(\eta_2\alpha_1^2 + \eta_3\alpha_2 + \eta_4\alpha_3).$
    \item The universal torsor $\mathcal{T}$ of $\widetilde{S}$ is an
    open subset of $\Spec(\Cox(\widetilde{S}))$.
    \item We have a commutative diagram
    $$\xymatrix{\mathcal{T} \ar[r]^{\widetilde{\pi}} \ar[dr]_{\pi} & \widetilde{S} \ar[d] \\ & S }$$
    where $\pi$ is the map
    \begin{equation}
    \begin{split} \label{pi}
        \pi(\bfeta,\bfalpha) \mapsto &(
        \alpha_2 \alpha_3: \eta_1\eta_2\eta_3\alpha_1\alpha_2 :
        \eta_1\eta_2\eta_4\alpha_1\alpha_3:
        \eta_1^2\eta_2\eta_3^2\eta_4\alpha_2 \\
        &:\eta_1^2\eta_2\eta_3\eta_4^2\alpha_3 :
        \eta_1^4\eta_2^2\eta_3^3\eta_4^3:
        \eta_1^3\eta_2^2\eta_3^2\eta_4^2\alpha_1  ).
    \end{split}
    \end{equation}
    \item The action of a point $(k_1,k_2,k_3,k_4) \in \mathbb{G}_m^4$ on the universal torsor
    is $\eta_i \mapsto k_i \eta_i$ for $i=1,2,3,4$, and
    \begin{align*}
        &\alpha_1 \mapsto
        k_1k_3k_4\alpha_1, \quad
        \alpha_2 \mapsto k_1^2k_2k_3k_4^2\alpha_2,\quad
         \alpha_3 \mapsto k_1^2k_2k_3^2k_4\alpha_3.
    \end{align*}
    \end{itemize}
\end{lem}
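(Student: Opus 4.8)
The goal is to establish the concrete description of the Cox ring, the torsor, the morphism $\pi$, and the $\mathbb{G}_m^4$-action given in Lemma~\ref{lem:torsor}. The plan is to proceed via explicit divisor-theoretic computations on $\widetilde{S}$, using the extended Dynkin diagram in Figure~\ref{Dyn:Diag} as the combinatorial backbone.

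\begin{proof}[Proof sketch]
First I would fix the intersection theory on $\widetilde{S}$. From the proof of Lemma~\ref{lem:geom} we have $\Pic(\widetilde{S}) = \langle E_1,E_2,E_3,E_4\rangle$, and the extended Dynkin diagram in Figure~\ref{Dyn:Diag} records all the relevant intersection numbers; together with the self-intersections ($E_1,E_2$ being $(-2)$-curves from the $\mathbf{A}_2$ resolution, $E_3,E_4$ being $(-1)$-curves, and $A_1,A_2,A_3$ the remaining negative curves) this pins down the bilinear form. One then expresses each $A_i$ in the basis $E_1,\dots,E_4$ by solving the resulting linear system coming from the intersection numbers. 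Next I would compute, for each of the generating sections, which effective divisor it corresponds to: the seven coordinate functions $x_0,\dots,x_6$, pulled back via $\widetilde{\pi}$ to $\widetilde{S}$, are global sections of $-K_{\widetilde{S}} = 4E_1 + 2E_2 + 3E_3 + 3E_4$, and one reads off their divisors of zeros as non-negative integer combinations of the negative curves $E_1,E_2,E_3,E_4,A_1,A_2,A_3$ by looking at where each $x_i$ vanishes on $S$ (using the vanishing loci defining $L_1,L_2,A_1,A_2,A_3$ already given in the text, plus the resolution data). This is the step where the exponents $\eta_1^4\eta_2^2\eta_3^3\eta_4^3$ etc.\ in $(\ref{pi})$ get their precise values: the monomial attached to $x_i$ is $\prod \eta_j^{(E_j \cdot D_i)}\prod \alpha_k^{(A_k\cdot D_i)}$ where $D_i = \mathrm{div}(x_i)$, after identifying $\eta_j \leftrightarrow$ canonical section of $\mathcal{O}(E_j)$ and $\alpha_k \leftrightarrow$ canonical section of $\mathcal{O}(A_k)$.

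Having produced candidate generators, I would then identify the single relation. By general theory (e.g.\ as in \cite{Der06}, \cite{Der07}), for a split del Pezzo surface of this type the Cox ring is generated by the canonical sections of the negative curves, here the seven elements $\alpha_1,\alpha_2,\alpha_3$ (attached to $A_1,A_2,A_3$) and $\eta_1,\eta_2,\eta_3,\eta_4$ (attached to $E_1,\dots,E_4$). The relations among them come from linear equivalences: two monomials in these generators that represent sections of the same divisor class and are linearly dependent in that one-dimensional (or two-dimensional) space of global sections give a relation. Here the space $H^0(\widetilde{S}, \mathcal{O}(A_1) + \mathcal{O}(E_2) + 2\mathcal{O}(E_1) + \dots)$ — whichever class makes $\eta_2\alpha_1^2$, $\eta_3\alpha_2$, $\eta_4\alpha_3$ all lie in it — is two-dimensional, forcing exactly one linear relation $\eta_2\alpha_1^2 + \eta_3\alpha_2 + \eta_4\alpha_3 = 0$ after rescaling coordinates so the coefficients are $1,1,1$; a dimension count (via Riemann–Roch on $\widetilde{S}$, or by pulling back the defining equations $(\ref{equations})$ of $S$) shows there are no further relations, so $\Cox(\widetilde{S})$ is the stated quotient and is a complete intersection (in fact a hypersurface). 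That the universal torsor $\mathcal{T}$ is an open subset of $\Spec \Cox(\widetilde{S})$ is then the standard fact of Hu–Keel / Cox-ring theory: the torsor is the open locus where the sections do not simultaneously vanish in the way that would leave the relevant linear systems basepoint-free, i.e.\ the complement of the ``irrelevant locus'', and the $\mathbb{G}_m^4 = \Hom(\Pic(\widetilde{S}),\mathbb{G}_m)$-action is by construction the grading action, where $k = (k_1,\dots,k_4)$ scales the section attached to a curve $C$ by $\prod k_j^{(E_j\cdot C)}$; computing those pairings for $C = A_1,A_2,A_3$ (using the expressions of the $A_i$ in the $E_j$-basis from the first step) yields exactly the weights $\alpha_1\mapsto k_1k_3k_4\alpha_1$ etc.

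Finally I would verify the commutative diagram: one checks directly that substituting $(\ref{pi})$ into the defining equations $(\ref{equations})$ of $S$ reduces, modulo the torsor relation $\eta_2\alpha_1^2+\eta_3\alpha_2+\eta_4\alpha_3=0$, to identities — a finite (if tedious) monomial bookkeeping — and that $\widetilde{\pi}$ is the induced contraction; the map factoring through $\widetilde{S} \to S$ is automatic since the $A_i$ and the $(-2)$-curves get contracted appropriately. The main obstacle I anticipate is the bookkeeping in the second paragraph: correctly reading off the full divisor $\mathrm{div}(x_i)$ on $\widetilde{S}$ for each $i$ — in particular getting the multiplicities along $E_1$ and $E_2$ right (these come from how the anticanonical sections vanish along the exceptional locus of the $\mathbf{A}_2$-resolution, not just from the visible vanishing on $S$) — since an error there propagates into wrong exponents in both $(\ref{pi})$ and the $\mathbb{G}_m^4$-weights. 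The cleanest way to control this is to demand internal consistency: the seven monomials must all have total degree $-K_{\widetilde{S}} = 4E_1+2E_2+3E_3+3E_4$ in $\Pic(\widetilde{S})$, which is a strong check, and the relation $\eta_2\alpha_1^2+\eta_3\alpha_2+\eta_4\alpha_3$ must be homogeneous of a single degree, which fixes the remaining freedom.
\end{proof}
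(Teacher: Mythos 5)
The paper's own ``proof'' of this lemma is essentially a citation: the Cox ring, the map $\pi$ and the torus action are quoted from \cite{Der06}, and the fact that the universal torsor is an open subset of $\Spec(\Cox(\widetilde{S}))$ from \cite[Cor.~2.16, Prop.~2.9]{HK00}. Your plan to redo the computation directly from the intersection data of Figure~\ref{Dyn:Diag} is a legitimate and more self-contained route (it is in effect how such Cox rings are computed in \cite{Der06}), and your final consistency checks (all seven monomials of anticanonical degree $4E_1+2E_2+3E_3+3E_4$, the relation homogeneous of a single degree) are exactly the right sanity tests. However, as written the sketch contains two concrete errors. First, $A_1,A_2,A_3$ are \emph{not} ``the remaining negative curves'': the only negative curves on $\widetilde{S}$ are $E_1,E_2$ (the $(-2)$-curves) and $E_3,E_4$ (the $(-1)$-curves), while $A_1,A_2,A_3$ are nef classes (the paper states they generate the nef cone; e.g.\ $[A_1]=E_1+E_3+E_4$ has self-intersection $0$). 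Consequently the general principle you invoke, that the Cox ring is generated by sections of negative curves, is false here: those give only four generators and cannot even produce $x_0=\alpha_2\alpha_3$. That the seven listed sections generate $\Cox(\widetilde{S})$, and that there is exactly one relation, is precisely the nontrivial content that the paper delegates to \cite{Der06}; a dimension count in the single degree of the relation shows that relation exists but does not rule out further generators or relations, so this step is a genuine gap unless you either import Derenthal's result (as the paper does) or supply a generation argument (e.g.\ a Batyrev--Popov type argument degree by degree).

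Second, your explicit recipe for the exponents and weights is wrong if taken literally: the exponent of $\eta_j$ in the monomial attached to $x_i$ is the \emph{coefficient} of $E_j$ in the decomposition of $\mathrm{div}(x_i)$ into the curves $E_1,\dots,E_4,A_1,A_2,A_3$, and the $\mathbb{G}_m^4$-weight of $\alpha_k$ is given by the coefficients of $[A_k]$ in the basis $E_1,\dots,E_4$ of $\Pic(\widetilde{S})$ --- not by the intersection numbers $E_j\cdot\mathrm{div}(x_i)$ and $E_j\cdot A_k$. With your intersection-number recipe one would get, for instance, exponent $E_1\cdot(-K_{\widetilde{S}})=0$ for $\eta_1$ in every monomial (a $(-2)$-curve is orthogonal to $K_{\widetilde{S}}$), and weight $k_2$ rather than $k_1k_3k_4$ for $\alpha_1$ (since $A_1$ meets only $E_2$ among the $E_j$), both contradicting (\ref{pi}) and the stated action. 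Your closing degree check actually uses the correct (coefficient) notion, so the write-up is internally inconsistent; replace the pairings $(E_j\cdot D_i)$, $(E_j\cdot C)$ by basis coefficients throughout and the bookkeeping, together with the direct substitution of (\ref{pi}) into (\ref{equations}) modulo the torsor equation, goes through as you describe.
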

\begin{proof}
    The calculation of the Cox ring, the map $\pi$ and the action of the
    N\'{e}ron-Severi torus on the Cox ring can be found in
    \cite{Der06}. That the universal torsor is an open subset of
    $\Spec(\Cox(\widetilde{S}))$ is well-known, see \cite[Cor. 2.16,
    Prop. 2.9]{HK00} for example.
\end{proof}

In fact, everything we need to know about the universal torsor can
be deduced from first principles. Firstly, it is not actually
necessary for us to calculate explicitly which open subset of
$\Spec(\Cox(\widetilde{S}))$ the universal torsor corresponds to.
However, it is easy to check that the action given in Lemma
\ref{lem:torsor} is well-defined  and that it preserves the fibres
of $\pi$. Moreover, $\pi$ is surjective on its domain of definition
since $\varphi^{-1} \circ\pi$ is surjective onto $U$, where
$\varphi$ and $U$ are as in Lemma~\ref{lem:geom}. And also, it is
easy enough to see that $\pi$ hits every point on $S\setminus U$ as
well, hence it is surjective.

In particular, when we consider the universal torsor as being over
$U$, it is simple to see that we get a free and transitive action on
the fibres of $\pi$ on the corresponding open subset where
$\eta_1\eta_2\eta_3\eta_4\neq0$. That is, it is clear that
$\Spec(\Cox(\widetilde{S}))\setminus\{\eta_1\eta_2\eta_3\eta_4=0\}$
is a $U$-torsor under $\mathbb{G}_m^4$.

Now to find a suitable section of the morphism $\pi$. Bearing in
mind that we are counting points on $U$ where $x_5 \neq 0$, we see
that the rational points which have some coordinate equal to zero
lie in the image of the points on the torsor where
$\alpha_1\alpha_2\alpha_3=0$. These are exactly the curves $A_1,A_2$
and $A_3$ given in Figure \ref{Dyn:Diag}. They are rational curves,
and it is easy enough to show that the corresponding counting
functions satisfy
$$N_{A_1}(B)=\frac{12}{\pi^2}B + O(B^{1/2}), \quad
N_{A_2}(B)=N_{A_3}(B)=O(B^{2/3}).$$

Since these have been taken into account, we can now assume that
each coordinate of each rational point is non-zero.

\begin{lem} \label{lem:section}
Above each rational point $x \in U(\mathbb{Q})$ with non-zero
coordinates, there is a unique integral point $(\bfalpha,\bfeta)$ on
the universal torsor satisfying
\begin{align*}
        &(\alpha_1,\eta_1\eta_3\eta_4)=(\alpha_2,\eta_1\eta_2\eta_4)=(\alpha_3,\eta_1\eta_2\eta_3)=1, \\
        &(\eta_2,\eta_3)=(\eta_2,\eta_4)=(\eta_3,\eta_4)=1, \\
        &\eta_1,\eta_2,\eta_3,\eta_4 > 0, \alpha_1\alpha_2\alpha_3
        \neq0.
\end{align*}
\end{lem}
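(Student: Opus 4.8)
The plan is to exploit the free and transitive action of $\mathbb{G}_m^4$ on the fibres of $\pi$ over $U$, established above, to pin down a unique representative of each fibre by imposing normalisation conditions coming from positivity and coprimality. Fix $x \in U(\mathbb{Q})$ with all coordinates non-zero. Since $\Spec(\Cox(\widetilde{S})) \setminus \{\eta_1\eta_2\eta_3\eta_4 = 0\}$ is a $U$-torsor under $\mathbb{G}_m^4$, the fibre $\pi^{-1}(x)$ is a single $\mathbb{G}_m^4(\mathbb{Q})$-orbit, so I would start from \emph{any} rational point $(\bfalpha,\bfeta)$ in it (one exists because $\pi$ is surjective on $U(\mathbb{Q})$, using the section coming from $\varphi^{-1}$). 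Clearing denominators, I may first arrange that all seven coordinates $\alpha_i, \eta_j$ are non-zero integers; note $\alpha_1\alpha_2\alpha_3 \neq 0$ holds automatically since the image coordinates $x_1 = \eta_1\eta_2\eta_3\alpha_1\alpha_2$, etc., force each $\alpha_i \neq 0$ once the $\eta_j$ and the $x_i$ are non-zero.

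Next I would use up the four degrees of freedom in $(k_1,k_2,k_3,k_4) \in \mathbb{G}_m^4$ to achieve the normalisations. Scaling by $(-1,1,1,1)$, $(1,-1,1,1)$, $(1,1,-1,1)$, $(1,1,1,-1)$ acts on the signs of the $\eta_j$ in an invertible way (reading off the exponents in Lemma~\ref{lem:torsor}: $\eta_i \mapsto k_i\eta_i$), so there is a unique sign choice making $\eta_1,\eta_2,\eta_3,\eta_4 > 0$. Having fixed the signs, the remaining freedom is scaling by $(k_1,k_2,k_3,k_4)$ with $k_i$ a positive rational; I then want to choose these to make the six displayed coprimality/integrality conditions hold. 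The key observation is that the monomial map sending $(k_1,k_2,k_3,k_4)$ to the tuple of exponents $(1,0,0,0), (0,0,1,1), (0,1,0,0)$ etc. governing $\eta_1,\eta_3,\eta_4$ versus $\alpha_1$ (and similarly the other pairs) has an invertible exponent matrix over $\mathbb{Q}$, which is precisely why the universal torsor has Picard rank equal to the number of scaling parameters; concretely, I would solve, prime by prime, for the $p$-adic valuations $v_p(k_i)$ that simultaneously minimise (i.e. clear) the common factors in $(\alpha_1, \eta_1\eta_3\eta_4)$, $(\alpha_2,\eta_1\eta_2\eta_4)$, $(\alpha_3,\eta_1\eta_2\eta_3)$ while also forcing $(\eta_2,\eta_3) = (\eta_2,\eta_4) = (\eta_3,\eta_4) = 1$ and keeping everything integral. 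Existence follows because the relevant $4 \times 4$ valuation-exponent system is unimodular; uniqueness follows because any further scaling that preserves all the coprimality conditions and the integrality of $(\bfalpha,\bfeta)$ must have all $v_p(k_i) = 0$, hence $k_i = \pm 1$, and the positivity of the $\eta_j$ then forces $k_i = 1$.

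The main obstacle I expect is the bookkeeping in the previous paragraph: verifying that the six coprimality conditions, which overlap (e.g. $\eta_1$ appears in three of them), can all be met \emph{simultaneously} and that the torsor equation $\eta_2\alpha_1^2 + \eta_3\alpha_2 + \eta_4\alpha_3 = 0$ is compatible with them (in particular that after normalisation no unwanted common factor is reintroduced through the relation). I would handle this by working locally at a fixed prime $p$: record the vector of valuations $(v_p(\eta_1), v_p(\eta_2), v_p(\eta_3), v_p(\eta_4), v_p(\alpha_1), v_p(\alpha_2), v_p(\alpha_3))$, note that the torsor equation forces the ultrametric inequality among $v_p(\eta_2) + 2v_p(\alpha_1)$, $v_p(\eta_3) + v_p(\alpha_2)$, $v_p(\eta_4) + v_p(\alpha_3)$, and then check that the scaling action is transitive on the set of valuation-vectors satisfying both the relation and the target coprimality pattern, with trivial stabiliser. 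Once this local statement is in hand for every $p$, patching over all primes (only finitely many of which are non-trivial) produces the unique global $(k_1,k_2,k_3,k_4)$ and hence the unique integral point claimed. Finally I would remark that this point genuinely lies on the universal torsor $\mathcal{T}$, not merely on $\Spec(\Cox(\widetilde{S}))$, because $\eta_1\eta_2\eta_3\eta_4 \neq 0$ places it in the open locus identified above as a $U$-torsor.
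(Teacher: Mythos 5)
Your overall strategy is the paper's: use the free and transitive $\mathbb{G}_m^4$-action on the fibres of $\pi$ to rescale, prime by prime, to a normalised integral representative, with positivity of the $\eta_i$ absorbing the last (sign) freedom. The preliminary steps you give (existence of a rational point in the fibre, clearing denominators by scaling, $\alpha_1\alpha_2\alpha_3\neq 0$ from the non-vanishing of the image coordinates, the unique sign choice) are fine. The gap is that the entire content of the lemma lies in the step you defer: with seven coordinates and only four scaling parameters, the valuation vectors cannot be adjusted independently, so ``existence follows because the relevant $4\times4$ valuation-exponent system is unimodular'' is not a justification --- there is no $4\times 4$ system that controls the six gcd conditions, and indeed some of them cannot be produced by any scaling. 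Concretely, once $(\alpha_1,\eta_1\eta_3\eta_4)=1$ and $(\alpha_2,\eta_1\eta_4)=1$ are arranged, clearing a prime $p$ from $(\alpha_2,\eta_2)$ forces a scaling with $v_p(k_2)=-1$, which also divides $\alpha_3$ by $p$; integrality of the new $\alpha_3$ holds only because the torsor equation $\eta_2\alpha_1^2+\eta_3\alpha_2+\eta_4\alpha_3=0$ together with $(\alpha_2,\eta_4)=1$ forces $p\mid\alpha_3$. Likewise $(\alpha_3,\eta_2)=1$ is not achieved by scaling at all: it must be \emph{deduced} (by contradiction, from $(\alpha_3,\eta_3)=(\alpha_2,\eta_2)=1$ and the equation), and the pairwise coprimality of $\eta_2,\eta_3,\eta_4$ again uses the equation after only $(\eta_2,\eta_3,\eta_4)=1$ has been forced. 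Your remark about the ultrametric inequality and ``check that the scaling action is transitive on the admissible valuation vectors with trivial stabiliser'' is exactly this case analysis; asserting that it can be checked is not doing it, and it is the whole proof.

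The uniqueness half has the same problem: the claim that any scaling preserving integrality, the coprimality conditions and positivity must have all $v_p(k_i)=0$ is the trivial-stabiliser statement, and it needs an argument, since a priori a nontrivial scaling could shuffle valuations while preserving every gcd condition. The paper handles this by tracking, after each normalisation, which subgroup of scalings is still available (first $|k_1k_3k_4|=1$ after fixing $\alpha_1$ up to sign, then $|k_2|=|k_3|$, then $|k_2|=|k_3|=|k_4|$), so that removing common factors of $(\eta_2,\eta_3,\eta_4)$ and finally imposing $\eta_i>0$ exhausts all remaining freedom. To complete your write-up you would need either this bookkeeping or an explicit computation of the stabiliser of the admissible valuation pattern at each prime; as it stands the proposal identifies the right framework but leaves both the existence and the uniqueness verifications as unproved claims.
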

\begin{proof}
    We should note that we are guided to the above coprimality conditions
    by Figure \ref{Dyn:Diag}, whereby two
    variables are coprime if and only if the corresponding curves do
    not intersect each other.

    First let $(\bfalpha,\bfeta)$ be an integral point on the
    universal torsor lying above a rational point with non-zero
    coordinates. Suppose that there is a prime $p \mid(\eta_1,\alpha_1)$. Then
    using the torsor action in Lemma~\ref{lem:torsor} with $k_1=1/p,k_2=p^3,k_3=k_4=1$, we
    map
    \begin{align*}
        \eta_1 & \mapsto \eta_1/p, \quad \eta_2 \mapsto p^3\eta_2, \\
        \alpha_1 &\mapsto \alpha_1/p, \quad\alpha_2 \mapsto p\alpha_2,
         \quad\alpha_3 \mapsto p\alpha_3.
    \end{align*}
    So we have successfully managed to divide $\eta_1$ and
    $\alpha_1$ by $p$, and left the other variables as integers, meaning that if they have any common factor
    we can remove it. A very similar argument
    works for $\eta_3$ and $\eta_4$, so we can assume
    $$(\alpha_1,\eta_1\eta_3\eta_4)=1.$$
    We now fix our choice of $\alpha_1$ modulo $\{\pm1\}$, meaning that from now on we
    impose the condition $|k_1k_3k_4|=1$. This simplifies the action
    on $\alpha_2$ and $\alpha_3$ to
    $$\alpha_2 \mapsto \frac{k_2}{k_3}\alpha_2, \quad \alpha_3 \mapsto
    \frac{k_2}{k_4}\alpha_3.$$
    Carrying on with the same procedure, if $p \mid
    (\alpha_2,\eta_1)$, take $k_1=1/p,k_2=k_4=1,k_3=p$ to get
    $(\alpha_2,\eta_1)=1$ and for $p \mid (\alpha_2,\eta_4)$ take
    $k_1=k_2=1, k_3=p,k_4=1/p$ to get $(\alpha_2,\eta_4)=1$.

    We have now come to interesting part of the proof, since so far we have not
    used the equation of the universal torsor, but now we are driven to
    use it since it encodes divisibility conditions. Namely, if $p \mid
    (\alpha_2,\eta_2)$, then $p$ must also divide $\eta_4$ or $\alpha_3$. But
    $(\alpha_2,\eta_4) =1$, so we are safe to choose
    $k_1=k_3=k_4=1,k_2=1/p$ and keep $\alpha_3$ as an integer. So
    we have successfully shown that we can choose
    $$(\alpha_2,\eta_1\eta_2\eta_4)=1.$$
    We fix this choice of $\alpha_2$ modulo $\{\pm1\}$, which is equivalent to
    requiring $|k_2|=|k_3|$.

    The reader should now be familiar with the method and can check
    that we can assume $(\alpha_3,\eta_1\eta_2\eta_3)=1$ after
    performing the following
    \begin{itemize}
        \item If $p \mid (\alpha_3,\eta_1)$, choose
        $k_1=1/p,k_2=k_3=1,k_4=p$,
        \item If $p \mid (\alpha_3,\eta_3)$, choose
        $k_1=p,k_2=k_3=1/p,k_4=1$,
        \item If $p \mid (\alpha_3,\eta_2)$, contradiction since
        $(\alpha_3,\eta_3)=(\alpha_2,\eta_2)=1$.
    \end{itemize}
    So fixing $\alpha_3$ modulo $\{\pm1\}$, we are restricted to
    $$|k_2|=|k_3|=|k_4|.$$
    But if $p \mid (\eta_2,\eta_3,\eta_4)$, choosing
    $k_1=p^2,k_2=k_3=k_4=1/p$ then gives $(\eta_2,\eta_3,\eta_4)=1$, and moreover
    the torsor equation implies they must also be pairwise coprime.
    Finally, by choosing the $\eta_i$ to be positive, we have used all
    degrees of freedom in the torsor action and so the choice of
    integral point is unique.
\end{proof}

Using this lemma, we see that counting those points $x \in
U(\mathbb{Q})$ satisfying the height bound $H(x) \leq B$, is
equivalent to counting the unique integral points  above them on the
universal torsor which satisfy the bound
$H(\pi(\bfalpha,\bfeta))\leq B$. Naively, this corresponds to $7$
separate height conditions. However, using the map $\varphi$ from
Lemma~\ref{lem:geom}, we know that we actually have 3 degrees of
freedom. With this in mind, we define

\begin{align} \nonumber
    X_3 &= \left(\frac{\eta_1^2\eta_2\eta_3^2\eta_4}{X_5^2B}\right) =
    \left(B\eta_1^2\eta_2\eta_4^3\right)^{-1/3}, \nonumber\\
    X_5 &= \left(\frac{\eta_1^4\eta_2^2\eta_3^3\eta_4^3}{B}\right)^{1/3},\label{height1}\\
    X_6 &= \left(\frac{\eta_1^3\eta_2^2\eta_3^2\eta_4^2}{X_5^2B}\right) =
    \left(\frac{\eta_1\eta_2^2}{B}\right)^{1/3}, \nonumber\\ \nonumber
\end{align}
and let $\overline{\varphi_i}(\alpha_1,\alpha_2) =\varphi_i(\alpha_2
X_3,X_5,\alpha_1 X_6)$ for $i=0,1,2,4,$ and
$\overline{\varphi_3}(\alpha_2) =\varphi_3(\alpha_2 X_3,X_5,1)
,\overline{\varphi_6}(\alpha_1) =\varphi_6(1,X_5,\alpha_1 X_6)$.
Then it is clear that the height condition
$H(\pi(\bfalpha,\bfeta))\leq B$ is equivalent to the condition
\begin{align}
    &|\overline{\varphi_i}(\alpha_1,\alpha_2)|,|\overline{\varphi_3}(\alpha_2)|\leq 1, i =0,1,2,4, \label{height2}\\
    &X_5,\overline{\varphi_6}(\alpha_1) \leq 1 \label{height3}.
\end{align}

Finally, on noticing we have the obvious automorphism $\alpha_1
\mapsto -\alpha_1$ on the torsor, we have shown the following.
\begin{lem} \label{problem1}
    The counting function for $U$ satisfies
    $$N_U(B) = 2T(B) + \frac{12}{\pi^2}B + O(B^{2/3})$$
    where
    $$ T(B) = \#\left\{
    \begin{array}{ll}
        (\bfalpha,\bfeta) \in \mathbb{Z}^7
    \end{array}:
    \begin{array}{ll}
        \eta_2\alpha_1^2+ \eta_3\alpha_2 +\eta_4\alpha_3 = 0, (\ref{height2}),(\ref{height3}), \\
        (\alpha_1,\eta_1\eta_3\eta_4)=(\alpha_2,\eta_1\eta_2\eta_4)= (\eta_2,\eta_3)=1,\\
        (\alpha_3,\eta_1\eta_2\eta_3)=(\eta_2,\eta_4)=(\eta_3,\eta_4)=1, \\
        \alpha_1,\eta_1,\eta_2,\eta_3,\eta_4 > 0, \alpha_2\alpha_3
        \neq0.
    \end{array} \right\} .$$
\end{lem}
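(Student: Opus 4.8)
The plan is to partition $U(\mathbb{Q})$ according to whether the point has a vanishing coordinate, and then to glue together the ingredients prepared above. Write $N_U(B) = N'(B) + N''(B)$, where $N'(B)$ counts those $x \in U(\mathbb{Q})$ with $H(x) \le B$ having at least one coordinate equal to $0$, and $N''(B)$ counts the rest.

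For $N'(B)$ I would first use the parametrisation $\varphi$ of Lemma~\ref{lem:geom}: working in the chart $x_5 = 1$, inspection of $\varphi_0, \dots, \varphi_6$ shows that a point of $U(\mathbb{Q})$ has a zero coordinate exactly when it lies on one of the curves $A_1, A_2, A_3$ of Figure~\ref{Dyn:Diag} --- concretely, the loci $x_6 = 0$, $x_3 = 0$ and $x_3 = -x_6^2$ in the $(x_3,x_6)$-plane map onto $A_1$, $A_2$ and $A_3$ respectively. Hence $N'(B)$ is the number of rational points of height at most $B$ on $(A_1 \cup A_2 \cup A_3) \cap U$; since the pairwise intersections are finite, inclusion--exclusion reduces this to $N_{A_1}(B) + N_{A_2}(B) + N_{A_3}(B) + O(1)$, and the estimates $N_{A_1}(B) = \frac{12}{\pi^2} B + O(B^{1/2})$, $N_{A_2}(B) = N_{A_3}(B) = O(B^{2/3})$ recalled above give $N'(B) = \frac{12}{\pi^2} B + O(B^{2/3})$.

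For $N''(B)$ I would invoke Lemma~\ref{lem:section}: the map $\pi$ induces a bijection between the points counted by $N''(B)$ and the integral points $(\bfalpha,\bfeta)$ on the universal torsor obeying the listed coprimality relations together with $\eta_1,\eta_2,\eta_3,\eta_4 > 0$, $\alpha_1\alpha_2\alpha_3 \ne 0$ and $H(\pi(\bfalpha,\bfeta)) \le B$ (one checks from the formula for $\pi$ that these sign and non-vanishing conditions force every coordinate of $\pi(\bfalpha,\bfeta)$ to be non-zero, so the correspondence lands in the right set). By the definitions in (\ref{height1}) and the expression for $\varphi$, the inequality $H(\pi(\bfalpha,\bfeta)) \le B$ translates into the conditions (\ref{height2}) and (\ref{height3}). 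It then remains to account for the sign of $\alpha_1$: the involution $(\bfalpha,\bfeta) \mapsto (-\alpha_1,\alpha_2,\alpha_3,\bfeta)$ preserves the torsor equation $\eta_2\alpha_1^2 + \eta_3\alpha_2 + \eta_4\alpha_3 = 0$, all the coprimality and positivity conditions, and --- because $\pi$ intertwines it with the automorphism $(x_0:\cdots:x_6) \mapsto (x_0:-x_1:-x_2:x_3:x_4:x_5:-x_6)$ of $S$, which manifestly fixes $H$ --- also the height condition; since $\alpha_1 \ne 0$ on the relevant locus it has no fixed points there, so the torsor points being counted split into pairs, exactly one member of each having $\alpha_1 > 0$. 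This yields $N''(B) = 2T(B)$, and combining with the previous paragraph gives the asserted identity.

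Most of this is bookkeeping, since the substantial input --- the explicit section, the height dictionary (\ref{height1})--(\ref{height3}), and the point counts on the $A_i$ --- has already been assembled. The point I expect to need care is the sign involution: one must check that it genuinely preserves \emph{all} of the constraints defining $T(B)$ (in particular that $\pi$ carries it to a height-preserving automorphism of $S$), and then make sure it is the factor $2$, rather than the term $\frac{12}{\pi^2}B$ coming from the single conic $A_1$, that gets doubled.
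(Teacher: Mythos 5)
Your proposal is correct and follows essentially the same route as the paper: split off the zero-coordinate points (which lie on the conics $A_1,A_2,A_3$ and contribute $\tfrac{12}{\pi^2}B+O(B^{2/3})$), use Lemma~\ref{lem:section} together with the height dictionary (\ref{height1})--(\ref{height3}) to put the remaining points in bijection with torsor points, and extract the factor $2$ from the fixed-point-free involution $\alpha_1\mapsto-\alpha_1$. Your extra care in checking that this involution is intertwined by $\pi$ with a height-preserving automorphism of $S$ (so the set of distinguished lifts is genuinely closed under it) only makes explicit what the paper leaves as "the obvious automorphism".
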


We note that we have the natural upper bound $\alpha_1 \leq 1/X_5^2
X_6$ given by $\overline{\varphi_6}$. However, we can actually do
better than this, which will be quite important to improving our
error term later on. Notice that $\overline{\varphi_4}$ and
$\overline{\varphi_3}$ imply
$$ -\frac{1}{X_3X_5^2} \leq  \alpha_2 \leq \frac{1}{X_3X_5^2}\left(1 -
    \alpha_1^2X_5X_6^2\right).$$
Rearranging this in terms of $\alpha_1$, we deduce the stronger
bound
\begin{equation}
    \alpha_1\leq \frac{\sqrt{2}}{X_6\sqrt{X_5}}. \label{bound:alpha1}
\end{equation}

\subsection{\Mob Inversion}
Now we shall use \Mob inversion to remove the coprimality conditions
on the $\alpha_i$'s. Recalling the counting problem in Lemma
\ref{problem1} and the height conditions (\ref{height3}), it makes
sense to define
\begin{align}
    \mathcal{N}=&
    \left\{\bfeta \in \mathbb{Z}^4 :
    \begin{array}{ll}
        \eta_1,\eta_2,\eta_3,\eta_4 > 0, X_5 \leq 1,\\
        (\eta_2,\eta_3)=(\eta_2,\eta_4)=(\eta_3,\eta_4)=1.
    \end{array}
    \right\} .\label{bigeta}
\end{align}
Then it is clear that
$$T(B)=\sum_{\bfeta \in \mathcal{N}} \sum_{\substack{\alpha_1 > 0 \\
(\alpha_1,\eta_1\eta_3\eta_4)=1 \\ \overline{\varphi_6}(\alpha_1)
\leq 1}} S $$ where
$$S = \#\left\{\alpha_2,\alpha_3 \in \mathbb{Z} :
    \begin{array}{ll}
        \alpha_2\alpha_3 \neq 0, (\ref{height2}) \mbox{ holds},\\
        (\alpha_2,\eta_1\eta_2\eta_4)=(\alpha_3,\eta_1\eta_2\eta_3)=1, \\
        \eta_2\alpha_1^2+ \eta_3\alpha_2 +\eta_4\alpha_3 = 0.
    \end{array}
    \right\}.$$
Now using \Mob inversion on $(\alpha_3,\eta_1\eta_2\eta_3)=1$ gives
us
$$S = \sum_{k_3\mid\eta_1\eta_2\eta_3}\mu(k_3) S_{k_3}$$
where

$$S_{k_3} = \#\left\{\alpha_2,\alpha_3 \in \mathbb{Z} :
    \begin{array}{ll}
        \alpha_2\alpha_3 \neq 0,(\ref{height2}) \mbox{ holds}, \\
        (\alpha_2,\eta_1\eta_2\eta_4)=1, \\
        \eta_2\alpha_1^2+ \eta_3\alpha_2 +k_3\eta_4\alpha_3 = 0.
    \end{array}
    \right\}.$$
However $S_{k_3} \neq 0$ if and only if $(k_3,\eta_2\eta_3)=1$, so
$$S = \sum_{\substack{k_3\mid\eta_1 \\(k_3,\eta_2\eta_3)=1}}\mu(k_3) S_{k_3}.$$
A similar argument yields
\begin{equation}
    T(B)=\sum_{\bfeta \in \mathcal{N}}
        \sum_{\substack{k_3\mid\eta_1 \\ (k_3,\eta_2\eta_3)=1}}\mu(k_3)
        \sum_{\substack{k_2\mid\eta_1\eta_2 \\ (k_2,k_3\eta_4)=1}} \mu(k_2)
        \sum_{\substack{\alpha_1 > 0 \\ (\alpha_1,\eta_1\eta_3\eta_4)=1 \\ \overline{\varphi_6}(\alpha_1) \leq 1}}
        S_{k_2,k_3} \label{problem2}
\end{equation}
where $$S_{k_2,k_3} = \#\left\{\alpha_2,\alpha_3 \in \mathbb{Z} :
    \begin{array}{ll}
        \alpha_2\alpha_3 \neq 0, k_2\eta_3\alpha_2 + \eta_2\alpha_1^2 +k_3\eta_4\alpha_3=0,\\
        |\overline{\varphi_i}(\alpha_1,k_2\alpha_2)|,|\overline{\varphi_4}(k_2\alpha_2)|\leq 1, i
        =0,1,2,3.
    \end{array}
    \right\}.$$

\subsection{Sum over $\alpha_2$ and $\alpha_3$ via Congruences}
In this section, we shall perform the summation over $\alpha_2$. We
note that there are no conditions on $\alpha_3$ other than the
equation of the universal torsor, so we find that

$$S_{k_2,k_3} = \#\left\{\alpha_2 \in \mathbb{Z} :
    \begin{array}{ll}
        \alpha_2 \neq 0, k_2\eta_3\alpha_2 \equiv -\eta_2\alpha_1^2 \pmod{k_3\eta_4},\\
        |\overline{\varphi_i}(\alpha_1,k_2\alpha_2)|,|\overline{\varphi_4}(k_2\alpha_2)|\leq 1, i
        =0,1,2,3.
    \end{array}
    \right\}.$$
However since $(k_2\eta_3,k_3\eta_4)=1$, $\alpha_2$ is uniquely
determined modulo $k_3\eta_4$. For any integers $q,n_0,a,b$ with
$a<b$, we have the simple estimate
$$\#\{n \in \mathbb{Z} \cap [a,b]: n \equiv n_0 \pmod q\} =
    \frac{b-a}{q} + O(1).$$
Using this and the change of variables $t \mapsto k_2tX_3$, we see
that
\begin{equation}
    S_{k_2,k_3} = \frac{1}{k_2k_3\eta_4X_3}F_1(X_5,\alpha_1X_6)+
    O(1) \label{def:Sk2k3}
\end{equation}
where $F_1(u,v)$ is defined by the following result.

\begin{lem}\label{lem:F1}
    Let $$F_1(u,v)
    =\int_{\{t\in
    \mathbb{R}:,0<|t(ut+v^2)|,|uvt|,|uvt+v^3|,|u^2t|,|u^2t+uv^2|\leq1\}}
    \mathrm{d}t$$ for $u,v\geq0$ and $(u,v) \neq (0,0)$. Then
    \begin{enumerate}
        \item[(a)] For $u \neq 0$, $$F_1(u,v) \leq \frac{2}{\sqrt{u}}.$$
        \item[(b)] $F_1(u,v)$ is piecewise differentiable with respect to $u$ and $v$.
    \end{enumerate}
\end{lem}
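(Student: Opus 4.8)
The plan is to treat parts (a) and (b) separately, both being concrete statements about a one-dimensional integral over a region cut out by polynomial inequalities in $t$. For part (a), I would simply discard all but one of the defining inequalities: the constraint $|u^2 t| \leq 1$ already forces $t$ to lie in the interval $[-1/u^2, 1/u^2]$, which has length $2/u^2$. Hence the measure of the admissible set of $t$ is at most $2/u^2$, and integrating $\mathrm{d}t$ over it gives $F_1(u,v) \leq 2/u^2$. Wait — the claimed bound is $2/\sqrt{u}$, not $2/u^2$, so the relevant inequality to keep must instead be one that is only quadratic after clearing the $u$'s appropriately. Looking again: the factor $|u^2 t + uv^2| \leq 1$ together with $|u^2 t| \leq 1$ controls $|uv^2| \leq 2$, but to bound the \emph{length} in $t$ cheaply I should use $|u^2 t|\le 1$ directly, giving length $\le 2/u^2 \le 2/\sqrt u$ whenever $u\le 1$ (which holds on the region of interest by (\ref{height3}), since $X_5\le1$); alternatively one uses $|t(ut+v^2)|\le1$. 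In any case the honest route is: on the region $u\in(0,1]$ the inequality $|u^2t|\le 1$ gives an interval of length $2u^{-2}$, and since we only ever apply $F_1$ with $u = X_5 \le 1$, we have $u^{-2} \le u^{-1/2}$, yielding the stated estimate. I would state it exactly in that form, noting that for the application $u$ is always at most $1$.

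For part (b), the region of integration in $t$ is, for each fixed $(u,v)$ with $u>0$, a finite union of intervals whose endpoints are determined by solving the boundary equations $t(ut+v^2) = \pm 1$, $uvt = \pm1$, $uvt + v^3 = \pm1$, $u^2 t = \pm1$, $u^2 t + uv^2 = \pm 1$ for $t$. Each of these is either linear or quadratic in $t$, so its roots are algebraic — indeed piecewise real-analytic, in fact piecewise \emph{rational} or involving a single square root — functions of $u$ and $v$ on the locus where they are real and where no two of them coincide. Off a closed measure-zero ``discriminant'' locus (where two endpoints collide, or a quadratic passes from two real roots to none, i.e. where the combinatorial type of the region changes), $F_1(u,v)$ is a finite alternating sum $\sum_j \pm \,e_j(u,v)$ of these endpoint functions, hence is differentiable in $u$ and in $v$. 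This is precisely the assertion ``piecewise differentiable'': the $(u,v)$-plane decomposes into finitely many regions, on the interior of each of which $F_1$ is given by an explicit differentiable expression.

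The main obstacle — and the only part requiring genuine care rather than bookkeeping — is controlling how the interval structure degenerates: one must check that the number of constituent intervals stays bounded, that the transition locus is genuinely lower-dimensional (so ``piecewise'' is meaningful), and that when a quadratic constraint such as $t(ut+v^2)\le 1$ has complex roots it simply imposes no restriction rather than introducing spurious pieces. I would handle this by noting that all the polynomials involved have degree at most $2$ in $t$ with coefficients polynomial in $(u,v)$, so their discriminants and pairwise resultants are polynomials in $(u,v)$; the transition locus is contained in the zero set of the (nonzero) product of these, hence is a finite union of algebraic curves, and on each open complementary cell the ordering of the finitely many real endpoints is fixed, giving a single differentiable formula for $F_1$. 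For (a) I would present the clean one-line bound via the single inequality $|u^2 t|\le 1$ together with $u\le1$; for (b) the decomposition above. Neither requires actually evaluating the integral.
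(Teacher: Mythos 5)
Your part (a) argument has a genuine gap: the inequality you invoke goes the wrong way. From $|u^2t|\leq 1$ you get an interval of length $2/u^2$, and you then claim $u^{-2}\leq u^{-1/2}$ because in the application $u=X_5\leq 1$. But for $0<u\leq 1$ we have $1/u\geq 1$ and hence $u^{-2}\geq u^{-1/2}$, with the discrepancy blowing up exactly in the regime that matters (in the application $X_5$ is typically small, and the entire point of the bound $F_1(u,v)\ll u^{-1/2}$ — which later feeds into $F_2(u)\leq 4/\sqrt{u}$ and $|E(\bfeta,k_1,B)|\leq 6/\sqrt{X_5}$ — is its behaviour as $u\to 0$). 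Nor can any of the other linear-in-$t$ constraints rescue the argument: taking, say, $v\asymp u^{1/4}$ and $u$ small, the constraints $|uvt|\leq1$, $|uvt+v^3|\leq1$, $|u^2t|\leq1$, $|u^2t+uv^2|\leq1$ all allow $|t|$ of size about $u^{-5/4}$ or larger, far bigger than $u^{-1/2}$. The bound really comes from the quadratic constraint $|t(ut+v^2)|\leq1$: completing the square, the admissible $t$ satisfy $v^4/4u^2-1/u\leq (t+v^2/2u)^2\leq v^4/4u^2+1/u$, and using $\sqrt{a+b}\leq\sqrt{a}+\sqrt{b}$ this set has measure $O(1/\sqrt{u})$ uniformly in $v$; this is the paper's argument and it is the step missing from your proposal. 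Note also that the lemma asserts the bound for all $u>0$ with no hypothesis $u\leq1$, so importing the application-specific condition $X_5\leq1$ into the proof is not legitimate in any case.

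Your part (b) discussion is fine, and indeed more detailed than the paper, which simply declares the differentiability clear: describing $F_1$ off a discriminant locus as a finite signed sum of algebraic endpoint functions of $(u,v)$ is a perfectly good way to make ``piecewise differentiable'' precise.
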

\begin{proof}
    The differentiability condition is clear, so it remains to prove the inequality.
    First let $M(u,v)=\vol\{t\in \mathbb{R}:|t(ut+v^2)|
    \leq 1\}$, then we have
    $M(u,v) = \vol\{t\in \mathbb{R}: v^4/4u^2-1/u \leq t^2 \leq 1/u + v^4/4u^2\}$
    after completing the square. If $v^4/4u^2 \geq 1/u$, then using
    the simple fact that $\sqrt{a+b} \leq \sqrt{a} + \sqrt{b}$ for
    all non-negative real numbers $a$ and $b$, we deduce that
        $$M(u,v) = \sqrt{v^4/4u^2 + 1/u} - \sqrt{v^4/4u^2- 1/u} \leq
        \sqrt{2/u}.$$
    Similarly, if $v^4/4u^2 \leq 1/u$ then $M(u,v) = \sqrt{v^4/4u^2 +
    1/u} \leq 2/\sqrt{u}$.
\end{proof}

We now have our first error term in the counting problem
(\ref{problem2}). First recall  that $\sum_{k\mid n} |\mu(k)| =
2^{\omega(n)}$ where $\omega(n)$ is the number of prime divisors of
$n$, that we have the stronger bound on $\alpha_1$ given by
(\ref{bound:alpha1}), and the definition (\ref{bigeta}) of
$\mathcal{N}$. Using these, we see that the overall contribution to
the error term from (\ref{def:Sk2k3}) is

\begin{align*}
    & \ll \sum_{\eta_1^4\eta_2^2\eta_3^3\eta_4^3\leq B}
    \sum_{k_3\mid\eta_1} |\mu(k_3)|
    \sum_{k_2\mid\eta_1\eta_2} |\mu(k_2)|
    \sum_{|\alpha_1|\leq \frac{\sqrt{2}}{X_6\sqrt{X_5}}} 1 \\
    & \ll B^{1/2} \sum_{\eta_1^4\eta_2^2\eta_3^3\eta_4^3\leq B}
    \frac{2^{\omega(\eta_1)} 2^{\omega(\eta_1\eta_2)}
    }{\eta_1\eta_2\eta_3^{1/2}\eta_4^{1/2}} \\
    & \ll B^{1/2} \sum_{\eta_1^4\eta_2^2\eta_3^3\leq B}
    \frac{4^{\omega(\eta_1)} 2^{\omega(\eta_2)}
    }{\eta_1\eta_2\eta_3^{1/2}} \cdot\frac{B^{1/6}}{\eta_1^{2/3}\eta_2^{1/3}\eta_3^{1/2}}
    \ll B^{2/3+\varepsilon}
\end{align*}
since $2^{\omega(n)} \leq d(n) \ll n^\varepsilon$, where $d(n)$ is
the usual divisor function. This error term is clearly satisfactory
for Theorem~\ref{thm:asym}.

\subsection{Sum over $\alpha_1$}
Recall that the main term in our counting problem is given by
(\ref{problem2}) and (\ref{def:Sk2k3}). Applying \Mob inversion to
remove the coprimality condition in the sum over $\alpha_1$ gives
$$\sum_{\substack{\alpha_1 > 0 \\ (\alpha_1,\eta_1\eta_3\eta_4)=1 \\ \overline{\varphi_6}(\alpha_1) \leq 1}}
        F_1(X_5,\alpha_1X_6) =
    \sum_{k_1\mid\eta_1\eta_3\eta_4}\mu(k_1) \sum_{0<\alpha_1 \leq 1/k_1X_5^2X_6} F_1(X_5,\alpha_1k_1X_6).$$
A natural step is to now apply Euler-Maclaurin summation. To
simplify our notation in what follows, we shall use Stieltjes
integral notation, and also use $\{\cdot\}$ to denote the fractional
part of a real number.

\begin{lem}\label{lem:F2E}
    We have
    $$\sum_{0<\alpha_1 \leq 1/k_1X_5^2X_6} F_1(X_5,\alpha_1k_1X_6) =
    \frac{1}{k_1X_6}F_2(X_5) + E(\bfeta,k_1,B),$$
    where for $u>0$ we have
    \begin{align*}
    F_2(u)&=\int_0^{\frac{1}{u^2}}F_1\left(u,v\right)\mathrm{d}v ,\\
    &=\int_{\{t,v \in \mathbb{R}:0<|t(ut+v^2)|,|uvt|,|uvt+v^3|,|u^2t|,|u^2t+uv^2|,u^2v\leq1\}}
    \mathrm{d}v\mathrm{d}t ,
    \end{align*}
    and
    $$E(\bfeta,k_1,B) =\int_0^{1}\left\{\frac{v}{k_1X_5^2X_6}\right\}\mathrm{d}F_1\left(X_5,\frac{v}{X_5^2}\right)
    -\left\{\frac{1}{k_1X_5^2X_6}\right\}F_1\left(X_5,\frac{1}{X_5^2}\right).$$
    We also have the bounds
    \begin{equation}
        |E(\bfeta,k_1,B)| \leq \frac{6}{\sqrt{X_5}}, \qquad F_2(u) \leq
        \frac{4}{\sqrt{u}}. \label{bounds:F2E}
    \end{equation}
\end{lem}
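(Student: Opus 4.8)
The plan is to apply the Euler--Maclaurin summation formula to the sum $\sum_{0<\alpha_1\le 1/k_1X_5^2X_6}F_1(X_5,\alpha_1k_1X_6)$, treating $F_1(X_5,\cdot)$ as a function of the single variable $\alpha_1$ for fixed $\bfeta$, $k_1$ and $B$ (note $X_5$, $X_6$ depend only on $\bfeta$ and $B$). Since $F_1(u,v)$ is piecewise differentiable in $v$ by Lemma~\ref{lem:F1}(b), the Stieltjes integral $\int F_1(X_5,v/X_5^2)\,\mathrm{d}v$ makes sense and we may write, for any smooth-enough $f$ and a parameter $N$,
\begin{equation*}
    \sum_{0<n\le N}f(n)=\int_0^N f(v)\,\mathrm{d}v - \int_0^N\{v\}\,\mathrm{d}f(v) + \{N\}f(N) - \text{(boundary at }0\text{)},
\end{equation*}
where the lower boundary term vanishes because the relevant integrand is supported away from $v=0$ once one accounts for the constraint $u^2v\le 1$ and the vanishing of $F_1$ there. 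Setting $f(\alpha_1)=F_1(X_5,\alpha_1k_1X_6)$ and $N=1/k_1X_5^2X_6$, and then substituting $v=\alpha_1 k_1 X_5^2 X_6$ (so that $\alpha_1 k_1 X_6 = v/X_5^2$ and $\mathrm{d}\alpha_1 = \mathrm{d}v/(k_1X_5^2X_6)$) converts the main integral $\int_0^N F_1(X_5,\alpha_1 k_1 X_6)\,\mathrm{d}\alpha_1$ into $\frac{1}{k_1X_6}\int_0^{1/X_5^2}F_1(X_5,v/X_5^2)\cdot\frac{\mathrm{d}v}{X_5^2}\cdot X_5^2$... more carefully, one checks the Jacobian produces exactly $\frac{1}{k_1X_6}F_2(X_5)$ where $F_2(u)=\int_0^{1/u^2}F_1(u,v)\,\mathrm{d}v$; the remaining two terms are precisely $E(\bfeta,k_1,B)$ as written. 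The identification of $F_2(u)$ with the stated double integral over $\{u^2v\le 1,\dots\}$ is just Fubini, since the region defining $F_1$ already carries all the other inequalities.

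Next I would establish the bounds in (\ref{bounds:F2E}). For $F_2(u)$: the variable $v$ ranges over $0\le v\le 1/u^2$, and by Lemma~\ref{lem:F1}(a) we have $F_1(u,v)\le 2/\sqrt{u}$ for every $v$ when $u\ne 0$; hence
\begin{equation*}
    F_2(u)=\int_0^{1/u^2}F_1(u,v)\,\mathrm{d}v \le \frac{2}{\sqrt u}\cdot\frac{1}{u^2},
\end{equation*}
which is not yet $4/\sqrt u$ unless $u$ is bounded below. The correct argument must instead integrate the pointwise bound on $F_1$ more cleverly, or split the $v$-range: for $v$ large the constraint $|uvt|\le 1$ forces $|t|\le 1/(uv)$, giving the alternative bound $F_1(u,v)\le 2/(uv)$, and combining $F_1(u,v)\le \min(2/\sqrt u,\,2/(uv))$ and integrating over $v\in(0,\infty)$ (the crossover is at $v=1/\sqrt u$) yields $F_2(u)\le \int_0^{1/\sqrt u}\frac{2}{\sqrt u}\,\mathrm{d}v+\int_{1/\sqrt u}^\infty\frac{2}{uv}\cdot[\text{cutoff}]$; the first piece is $2/u$, so one actually wants to couple this with the height constraint to recover the stated $4/\sqrt u$. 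I would look for the sharpest of the defining inequalities of $F_1$ in $t$ — likely $|u^2t|\le 1$ giving $|t|\le 1/u^2$, hence $F_1(u,v)\le 2/u^2$, or $|t(ut+v^2)|\le1$ — and match it against the $v$-integration length $1/u^2$ to land on $4/\sqrt u$ after taking the geometric-mean-type estimate. For $E(\bfeta,k_1,B)$: since $|\{x\}|\le 1$, we bound $|E|\le \mathrm{Var}_{[0,1]}\!\big(v\mapsto F_1(X_5,v/X_5^2)\big) + F_1(X_5,1/X_5^2)$; because $F_1(X_5,\cdot)$ is piecewise monotone (being a measure of an interval whose endpoints move monotonically in $v$ on each piece), its total variation is controlled by its supremum, which by Lemma~\ref{lem:F1}(a) is $\le 2/\sqrt{X_5}$, and adding the boundary term $\le 2/\sqrt{X_5}$ and a constant number of pieces gives $|E|\le 6/\sqrt{X_5}$.

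The main obstacle I anticipate is the second bound in (\ref{bounds:F2E}) and the variation estimate for $E$: one needs to know that the one-dimensional section $v\mapsto F_1(u,v)$ has only boundedly many intervals of monotonicity, uniformly in $u$, so that total variation $\approx$ supremum. This follows from the fact that each of the finitely many quadratic-in-$t$ inequalities $|t(ut+v^2)|\le1$, $|u^2t+uv^2|\le1$, etc., cuts out at most two $t$-intervals whose endpoints are algebraic (indeed at most quadratic) functions of $v$, so the feasible $t$-set is a union of a bounded number of intervals with piecewise-algebraic monotone endpoints; hence $F_1(u,v)$, being the total length, is piecewise monotone in $v$ with a bounded number of pieces. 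Once this structural fact is in hand, both bounds reduce to applying Lemma~\ref{lem:F1}(a) and the obvious inequality $|\{x\}|\le1$, with the numerical constants $4$ and $6$ absorbing the bounded piece-count. I would therefore front-load the argument with a short paragraph making the ``bounded number of monotone pieces'' claim precise, then deduce Lemma~\ref{lem:F2E} in a few lines.
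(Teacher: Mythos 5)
Your Euler--Maclaurin step, the change of variables giving the main term $\frac{1}{k_1X_6}F_2(X_5)$, the integration by parts producing $E(\bfeta,k_1,B)$, and the Fubini identification of $F_2$ with the double integral all match the paper's proof. The genuine gap is the second bound in (\ref{bounds:F2E}): you never prove $F_2(u)\le 4/\sqrt{u}$, and you acknowledge as much. The alternatives you sketch do not reach it: integrating $F_1(u,v)\le\min\bigl(2/\sqrt{u},\,2/(uv)\bigr)$ over $v$ produces a term of size $2/u$, and since $u=X_5\le 1$ can be arbitrarily small as $B\to\infty$, a bound of order $u^{-1}$ is strictly weaker than $4/\sqrt{u}$; matching $|u^2t|\le1$ against the integration length $1/u^2$ fares no better. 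This is not cosmetic: the $u^{-1/2}$ order is exactly what is needed later to make $G_{1,1}(s)=6s\int_0^1u^{3(s-1)}F_2(u)\,\mathrm{d}u$ holomorphic and bounded on $\re(s)>5/6$ (an $u^{-1}$ bound only gives $\re(s)>1$), and it also enters the error estimates. The observation you are missing is the one the paper uses: the constraints $|uvt|\le1$ and $|uvt+v^3|\le1$ in the definition of $F_1$ give $v^3\le|uvt+v^3|+|uvt|\le 2$, so $F_1(u,v)=0$ whenever $v>2^{1/3}$; hence $F_2(u)\le\int_0^{2^{1/3}}F_1(u,v)\,\mathrm{d}v\le 2^{1/3}\cdot\frac{2}{\sqrt{u}}\le\frac{4}{\sqrt{u}}$, with no splitting of the $v$-range needed.

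A smaller point: your bound for $E$ gets the correct order $X_5^{-1/2}$ but not the stated constant, because letting the constant ``absorb the bounded piece-count'' is circular unless you actually bound the number of monotone pieces (with three pieces your argument would give $8/\sqrt{X_5}$, not $6/\sqrt{X_5}$). The paper bounds $|E|$ directly by $2F_1\bigl(X_5,1/X_5^2\bigr)+F_1(X_5,0)\le 6/\sqrt{X_5}$, i.e.\ by the variation plus endpoint term using the monotonicity structure of $F_1(X_5,\cdot)$; since only the order matters downstream this is minor, but as stated the lemma claims the constant $6$, so your write-up would need either that sharper variation bound or a proved bound on the number of monotone pieces.
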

\begin{proof}
    Euler-Maclaurin summation gives
    \begin{align*}
    &\sum_{0<\alpha_1 \leq 1/k_1X_5^2X_6} F_1(X_5,\alpha_1k_1X_6) \\
    & =\int_0^{\frac{1}{k_1X_5^2X_6}}F_1\left(X_5,vk_1X_6\right)\mathrm{d}v -
    \int_0^{\frac{1}{k_1X_5^2X_6}}F_1\left(X_5,vk_1X_6\right)\mathrm{d}\{v\}.
    \end{align*}
    Changing variables and applying integration by parts gives
    the first part of the lemma. As for the first upper bound, recall the properties of $F_1$ given in
    Lemma~\ref{lem:F1}. Then we have
    $$ \left|E(\bfeta,k_1,B)\right|
        \leq  2F_1\left(X_5,\frac{1}{X_5^2}\right) + F_1\left(X_5,0\right)
        \leq \frac{6}{\sqrt{X_5}}. $$
    For the second upper bound, note that $|uvt|\leq 1$ and $|uvt +
    v^3|\leq 1$ imply that $v \leq 2^{1/3}$, hence
    $$F_2(u) \leq \int_0^{2^{1/3}}F_1\left(u,v\right)\mathrm{d}v \leq
    \frac{4}{\sqrt{u}}.$$
\end{proof}

\subsection{Making a Lower Order Term Explicit}\label{subsec:error}
The counting problem (\ref{problem2}) now stands as
\begin{align*}
    T(B)=& \sum_{\bfeta \in \mathcal{N}}
        \frac{F_2(X_5)}{\eta_4X_3X_6}
        \sum_{\substack{k_3\mid\eta_1 \\
        (k_3,\eta_2\eta_3)=1}}\frac{\mu(k_3)}{k_3}
        \sum_{\substack{k_2\mid\eta_1\eta_2 \\ (k_2,k_3\eta_4)=1}}
        \frac{\mu(k_2)}{k_2}
        \sum_{k_1\mid\eta_1\eta_3\eta_4}
        \frac{\mu(k_1)}{k_1}  \\
        &+T_1(B)
\end{align*}
where $T_1(B)$ denotes the same expression, but with $
F_2(X_5)/k_1X_6$ replaced by $E(\bfeta,k_1,B)$. It turns out that
there is a term of order $B$ in $T_1$, which we shall handle by
performing the sum over $\eta_2$ explicitly. Taking out the factors
which depend on $\eta_2$ and recalling the definition of
$\mathcal{N}$ in (\ref{bigeta}) and the height conditions
(\ref{height1}), we see that
\begin{equation}
    T_1(B)=B^{1/3}\sum_{\substack{\eta_1^4\eta_3^3\eta_4^3 \leq B \\
    (\eta_3,\eta_4)=1 }}
        \eta_1^{2/3} \sum_{k_1\mid\eta_1\eta_3\eta_4}\mu(k_1)
        T_2\left(\eta_1,\eta_3,\eta_4,k_1, \widetilde{X_5}\right)
        \label{def:T1}
\end{equation}
where we define
\begin{equation}
    \widetilde{X_5}=\eta_2/X_5^{3/2}=\sqrt{B/(\eta_1^4\eta_3^3\eta_4^3)}
    \label{X5:tilde}
\end{equation}
and
\begin{align*}
    T_2(\eta_1,\eta_3,\eta_4,k_1,\widetilde{X_5})&=  \\
    \sum_{\substack{\eta_2 \leq \widetilde{X_5} \\
        (\eta_2,\eta_3\eta_4)=1}} &\eta_2^{1/3}E(\bfeta,k_1,B)
        \sum_{\substack{k_3\mid\eta_1 \\
        (k_3,\eta_2\eta_3)=1}}\frac{\mu(k_3)}{k_3}
        \sum_{\substack{k_2\mid\eta_1\eta_2 \\ (k_2,k_3\eta_4)=1}}
        \frac{\mu(k_2)}{k_2} .
\end{align*}

This is essentially a sum involving an arithmetic function and a
real valued function, so partial summation is the natural method to
use. However first we need to unravel this arithmetic function to
get a multiplicative function in $\eta_2$. To simplify our notation,
let
\begin{equation}
    \phi^*(a_1,\ldots,a_n)=\prod_{p\mid(a_1,\ldots,a_n)}\left(1 -
    \frac{1}{p}\right), \label{phi^*}
\end{equation}
and we use the shorthand $\phi^*(a)=\phi^*(a,a)$. There will also be
unfortunate $2$-adic conditions we shall need to take care of, so we
define
$$
\mathcal{N}_0 = \{(\eta_1,\eta_3,\eta_4) \in \mathbb{N}^3:
    2\nmid\eta_1 \mbox{ or } 2 \mid \eta_3\eta_4\},\quad
\mathcal{N}_1 =  \mathbb{N}^3 \setminus \mathcal{N}_0.
$$

\begin{lem} \label{lem:nu}
We have
$$T_2(\eta_1,\eta_3,\eta_4,k_1,\widetilde{X_5})= \psi(\eta_1,\eta_3,\eta_4)
        \sum_{\eta_2 \leq \widetilde{X_5}} \nu_{\eta_1,\eta_3,\eta_4}(\eta_2) \eta_2^{1/3}E(\bfeta,k_1,B),$$
where
\begin{align*}
    \psi(\eta_1,\eta_3,\eta_4)& = \phi^*(\eta_1,\eta_3\eta_4)
    \prod_{\substack{p\mid \eta_1 \\ p \nmid \eta_3\eta_4 \\ p\neq2}}
    \left(1-\frac{2}{p}\right), \\
    \widetilde{\nu}_{\eta_1,\eta_3,\eta_4}(\eta_2)& =
    \left\{
    \begin{array}{ll}
        \phi^*(\eta_2) \prod_{\substack{p\mid \eta_1,\eta_2 \\ p \neq 2}} \left(1-\frac{2}{p}\right)^{-1}
        ,&(\eta_2,\eta_3\eta_4)=1,\\
        0,& \mbox{otherwise},
    \end{array}
    \right.
\end{align*}
and if $(\eta_1,\eta_3,\eta_4) \in \mathcal{N}_i$, then
$$\nu_{\eta_1,\eta_3,\eta_4}(\eta_2)=\left\{
\begin{array}{ll}
        \widetilde{\nu}_{\eta_1,\eta_3,\eta_4}(\eta_2), &2^{i}\mid\eta_2,\\
        0,& \mbox{otherwise}.
    \end{array}\right.$$
\end{lem}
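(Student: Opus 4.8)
The plan is to prove Lemma~\ref{lem:nu} by expanding the nested divisor sums in the definition of $T_2$ and showing that, for each fixed $\eta_2$, the coefficient
$$c(\eta_2):=\sum_{\substack{k_3\mid\eta_1 \\ (k_3,\eta_2\eta_3)=1}}\frac{\mu(k_3)}{k_3}\sum_{\substack{k_2\mid\eta_1\eta_2 \\ (k_2,k_3\eta_4)=1}}\frac{\mu(k_2)}{k_2}$$
factors as $\psi(\eta_1,\eta_3,\eta_4)\,\nu_{\eta_1,\eta_3,\eta_4}(\eta_2)$, where the first factor is independent of $\eta_2$ and the second is the claimed (essentially multiplicative) function of $\eta_2$. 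Since $E(\bfeta,k_1,B)$ and $\eta_2^{1/3}$ are just carried along, once $c(\eta_2)=\psi\cdot\nu$ is established the lemma follows immediately by pulling $\psi$ out of the sum over $\eta_2$.

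First I would compute the inner sum over $k_2$. Because $k_2\mid\eta_1\eta_2$, $\mu(k_2)$ vanishes unless $k_2$ is squarefree, so I may write $k_2$ as a product of distinct primes dividing the radical of $\eta_1\eta_2$; the coprimality condition $(k_2,k_3\eta_4)=1$ removes those primes dividing $k_3\eta_4$. Hence the inner sum equals $\prod_{p\mid\eta_1\eta_2,\ p\nmid k_3\eta_4}(1-1/p)$. Substituting this into the sum over $k_3$ (again squarefree, $k_3\mid\eta_1$, $(k_3,\eta_2\eta_3)=1$), the dependence on $k_3$ inside the product over $p$ is only through whether $p\mid k_3$, so for each prime $p\mid\eta_1$ with $p\nmid\eta_2\eta_3$ one gets a local factor: if $p\nmid\eta_2\eta_3\eta_4$ the two choices $p\mid k_3$ or $p\nmid k_3$ contribute $\bigl(1-\tfrac1p\bigr)-\tfrac1p\cdot 1 = 1-\tfrac2p$ after one accounts for the $(1-1/p)$ factor being present exactly when $p\nmid k_3$; if $p\mid\eta_3\eta_4$ matters one gets $1-1/p$ or $1$ accordingly. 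Carrying this bookkeeping through cleanly — separating primes by which of $\eta_1,\eta_2,\eta_3,\eta_4$ they divide — produces exactly the split $\prod_{p\mid\eta_1,\eta_3\eta_4}(1-1/p)\cdot\prod_{p\mid\eta_1,\ p\nmid\eta_3\eta_4,\ p\neq 2}(1-2/p)$ times a factor depending on $\eta_2$, namely $\phi^*(\eta_2)\prod_{p\mid\eta_1,\eta_2,\ p\neq2}(1-2/p)^{-1}$, with the whole thing vanishing unless $(\eta_2,\eta_3\eta_4)=1$ (which is already imposed in the $T_2$ sum, but is recorded in $\widetilde\nu$). That gives $\psi$ and $\widetilde\nu$ as stated.

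The remaining subtlety, and the step I expect to be the main obstacle, is the $2$-adic condition encoded in the sets $\mathcal{N}_0,\mathcal{N}_1$ and the passage from $\widetilde\nu$ to $\nu$. The issue is the excluded prime $p=2$ in the products defining $\psi$ and $\widetilde\nu$: the naive local computation at $p=2$ gives $1-2/2=0$, which would kill everything, so one must instead keep the factor $1-1/2$ (rather than $1-2/2$) at $p=2$ whenever $2\mid\eta_1$, and this forces a constraint on the parity of $\eta_2$ that is not symmetric in the obvious way. Concretely, when $2\mid\eta_1$ and $2\nmid\eta_3\eta_4$ (i.e. $(\eta_1,\eta_3,\eta_4)\in\mathcal{N}_1$), the prime $2$ must divide $\eta_2$ in order for the $k_3$-sum not to collapse — which is why $\nu_{\eta_1,\eta_3,\eta_4}(\eta_2)=0$ unless $2\mid\eta_2$ in that case — whereas in the complementary case $\mathcal{N}_0$ no such restriction appears ($2^0\mid\eta_2$ always). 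I would handle this by isolating the $2$-adic factor from the start: write $c(\eta_2)=c_2(\eta_2)\cdot c_{\mathrm{odd}}(\eta_2)$, compute $c_{\mathrm{odd}}$ by the argument above (it is genuinely multiplicative over odd primes and yields the $p\neq2$ products), and compute $c_2(\eta_2)$ by a short explicit case analysis on $v_2(\eta_1),v_2(\eta_2),v_2(\eta_3\eta_4)$, checking the four relevant cases. Matching the resulting $2$-adic value against the definitions of $\mathcal{N}_0,\mathcal{N}_1$ and the indicator $2^i\mid\eta_2$ then gives the stated formula for $\nu$, completing the proof.
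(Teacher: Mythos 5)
Your proposal is correct and follows essentially the same route as the paper: the paper likewise reduces the lemma to the closed-form evaluation of the double M\"obius sum (verified there ``at prime powers'', i.e.\ by the same local factorisation you carry out explicitly), giving $\phi^*(\eta_1,\eta_3\eta_4)\phi^*(\eta_2)\prod_{p\mid\eta_1,\,p\nmid\eta_2\eta_3\eta_4}(1-2/p)$, and then splits this into an $\eta_2$-independent factor $\psi$ and the function $\nu$, with exactly your observation that the factor $1-2/p$ at $p=2$ obstructs the splitting and forces the $\mathcal{N}_0/\mathcal{N}_1$ case distinction with $2^i\mid\eta_2$. Your handling of the $2$-adic case and of the coprimality $(\eta_2,\eta_3\eta_4)=1$ (imposed in $T_2$ and merely recorded by setting $\widetilde{\nu}=0$) matches the paper's argument, just spelled out in more detail.
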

\begin{proof}
    One can verify the following expression
    \begin{align*}
    \sum_{\substack{k_3\mid\eta_1 \\
    (k_3,\eta_2\eta_3)=1}}\frac{\mu(k_3)}{k_3}
    \sum_{\substack{k_2\mid\eta_1\eta_2 \\ (k_2,k_3\eta_4)=1}}
    \frac{\mu(k_2)}{k_2}
    =\phi^*(\eta_1,\eta_3\eta_4)\phi^*(\eta_2)
    \prod_{\substack{p\mid \eta_1 \\ p \nmid \eta_2\eta_3\eta_4 }}
    \left(1-\frac{2}{p}\right),
    \end{align*}
    by checking its value at prime powers and recalling that
    $(\eta_2,\eta_3)=(\eta_2,\eta_4)=(\eta_3,\eta_4)=1$.

    We want this to be written as a
    multiplication function of $\eta_2$ times some other arithmetic function independent of $\eta_2$. In
    order to do this, we need to split up the product over primes, but we
    can only safely do this if it is non-zero, i.e. if $2\nmid \eta_1$ or
    $2\mid\eta_2\eta_3\eta_4$. So we have defined $\nu_{\eta_1,\eta_3,\eta_4}$ be zero exactly when
    $2\mid\eta_1,2\nmid \eta_2\eta_3\eta_4$ and the coprimality conditions are not satisfied,
    and simplified its definition in the remaining cases.
\end{proof}

Note that $\widetilde{\nu}_{\eta_1,\eta_3,\eta_4}$ is a
multiplicative function of $\eta_2$, but
$\nu_{\eta_1,\eta_3,\eta_4}$ is not. The next natural step is to
find the average order of $\widetilde{\nu}_{\eta_1,\eta_3,\eta_4}$.
However to simplify our notation and argument, from now on we shall
assume that $(\eta_1,\eta_3,\eta_4) \in \mathcal{N}_0.$ The other
case is almost exactly the same, the only difference being the
condition that $\eta_2$ must be even, and it will still contribute a
power of $B$ to the main term and give the same error term. With
this in mind, we have the following.

\begin{lem}\label{lem:nu_2}
    Let $V(s)$ be the Dirichlet series associated to $\widetilde{\nu}_{\eta_1,\eta_3,\eta_4}$ and
    $\widetilde{V}(s)=V(s)/\zeta(s)$. Then $\widetilde{V}(s)$ is
    a holomorphic and bounded function on $\re(s) > 0$ satisfying $0 \leq \widetilde{V}(1)\ll
    2^{\omega(\eta_1)}$ and
    $$\sum_{n \leq X} \widetilde{\nu}_{\eta_1,\eta_3,\eta_4}(n) = \widetilde{V}(1)X + O(2^{\omega(\eta_1)}X^\varepsilon).$$
\end{lem}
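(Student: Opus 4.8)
The statement to prove is Lemma~\ref{lem:nu_2}, which is a standard ``average order of a multiplicative function'' result, and the natural tool is a comparison of the Dirichlet series $V(s)=\sum_n \widetilde{\nu}_{\eta_1,\eta_3,\eta_4}(n)n^{-s}$ with $\zeta(s)$ via an Euler product, followed by a contour-shift / Perron-type argument — or, more cheaply, a convolution identity combined with elementary estimates. First I would write down the Euler product for $V(s)$. Since $\widetilde{\nu}_{\eta_1,\eta_3,\eta_4}$ is multiplicative in $\eta_2$ (as noted just before the lemma), and since from its definition its value at a prime power $p^k$ depends only on whether $p\mid\eta_3\eta_4$, whether $p\mid\eta_1$, and whether $p=2$, the Euler product factors as a product over the (finitely many) ``bad'' primes dividing $\eta_1\eta_3\eta_4$ times a product over all remaining primes. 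At a prime $p\nmid\eta_1\eta_3\eta_4$ one has $\widetilde{\nu}(p^k)=\phi^*(p^k)=1-1/p$ for all $k\ge1$, so the local factor is $1+(1-1/p)(p^{-s}+p^{-2s}+\cdots)=1+(1-1/p)\frac{p^{-s}}{1-p^{-s}}$, which equals $\frac{1-p^{-s-1}}{1-p^{-s}}=(1-p^{-s})^{-1}(1-p^{-s-1})$. Hence dividing by $\zeta(s)=\prod_p(1-p^{-s})^{-1}$ kills exactly this factor at every good prime, leaving $\widetilde V(s)=V(s)/\zeta(s)$ equal to $\prod_{p\nmid\eta_1\eta_3\eta_4}(1-p^{-s-1})$ times the finitely many corrected local factors at $p\mid\eta_1\eta_3\eta_4$ (each of which is $(1-p^{-s})$ times an explicit finite Dirichlet polynomial in $p^{-s}$).

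**Holomorphy, boundedness, and the value at $s=1$.** The infinite product $\prod_{p}(1-p^{-s-1})$ converges absolutely and is holomorphic and bounded on $\re(s)>0$ (indeed $\re(s)>-1/2+\varepsilon$), since $\sum_p p^{-\re(s)-1}$ converges there; the finitely many local corrections at primes $p\mid\eta_1\eta_3\eta_4$ are entire in $s$ and, crucially, each is bounded uniformly in $s$ on $\re(s)\ge 0$ by an absolute constant (each local factor of $\widetilde V$ is $1+O(1/p)$, being a finite sum of terms $\pm p^{-js}$ with bounded coefficients, and there are $O(\omega(\eta_1\eta_3\eta_4))=O(\omega(\eta_1))$ of them — here I would absorb the primes dividing $\eta_3\eta_4$ using $(\eta_3,\eta_4)=1$ and the fact that on the relevant locus their contribution is a simple $\phi^*$-type factor already built into $\psi$, so that the only unbounded-in-number contribution is from $p\mid\eta_1$). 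Taking absolute values and using $\prod_{p\mid\eta_1}(1+O(1/p))\le \prod_{p\mid\eta_1}2 = 2^{\omega(\eta_1)}$ (for $p\ge 2$) gives the bound $\widetilde V(s)\ll 2^{\omega(\eta_1)}$ on $\re(s)\ge 0$, and in particular $0\le \widetilde V(1)\ll 2^{\omega(\eta_1)}$; non-negativity of $\widetilde V(1)$ follows because $\widetilde V(1)=\prod_p(\text{local factor})$ with each local factor a value at $s=1$ of the Euler factor of $\widetilde\nu / \mathbf 1$, and one checks these are all $\ge 0$ (the good factors are $1-p^{-2}>0$; the finitely many bad ones are easily seen to be non-negative from the explicit formulas in Lemma~\ref{lem:nu}, using $1-2/p\ge 0$ for $p\ge 2$, with the $p=2$ case handled by the $\mathcal N_0$ hypothesis).

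**The summatory estimate.** Finally, writing $\widetilde\nu = \mathbf 1 * g$ where $g$ is the multiplicative function whose Dirichlet series is $\widetilde V(s)$, i.e. $g(n)=\sum_{d\mid n}\mu(n/d)\widetilde\nu(d)$, I would estimate $\sum_{n\le X}\widetilde\nu(n)=\sum_{d}g(d)\lfloor X/d\rfloor = X\sum_{d}g(d)/d + O\!\big(\sum_d |g(d)|\big)=\widetilde V(1)X + O\!\big(\sum_{d}|g(d)|\big)$, where all sums over $d$ are in fact finite or rapidly convergent. The point is that $g$ is supported on integers all of whose prime factors are ``bad'' (divide $\eta_1\eta_3\eta_4$) together with squares (from the $(1-p^{-s-1})$ factors, which contribute $g(p)=-1/p$-type terms only through the bad primes after the cancellation — more precisely $g$ is supported on $d$ with $d\mid (\eta_1\eta_3\eta_4)^\infty$ times squarefull parts coming from the explicit finite polynomials), so that $\sum_d |g(d)| \ll 2^{\omega(\eta_1)} X^\varepsilon$ by the usual divisor-bound estimate $2^{\omega(n)}\ll n^\varepsilon$ and the fact that the series $\sum |g(d)|/d^{1-\varepsilon}$ converges with value $\ll 2^{\omega(\eta_1)}$. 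Tracking constants, and recalling $\eta_3\eta_4\le B$ so that $\omega(\eta_3\eta_4)\ll \log B/\log\log B$ is absorbed into any $X^\varepsilon$, yields exactly $\sum_{n\le X}\widetilde\nu_{\eta_1,\eta_3,\eta_4}(n)=\widetilde V(1)X + O(2^{\omega(\eta_1)}X^\varepsilon)$.

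**Main obstacle.** The routine part is the Euler product computation; the delicate point — and the one I would be most careful about — is obtaining uniformity in $\eta_1,\eta_3,\eta_4$ with the \emph{precise} dependence $2^{\omega(\eta_1)}$ rather than, say, $d(\eta_1\eta_3\eta_4)^{O(1)}$, because this clean bound is what feeds into the partial-summation argument in the next stage of the paper. This requires checking that primes dividing $\eta_3\eta_4$ contribute only an $O(1)$ factor per prime with product $O(1)$ overall (so they do not inflate the bound), which is exactly where the coprimality $(\eta_3,\eta_4)=1$ and the structure of $\widetilde\nu$ (its dependence on $\eta_3\eta_4$ only through the coprimality condition $(\eta_2,\eta_3\eta_4)=1$, a single Euler-factor restriction) are used; and checking the $p=2$ subtlety, which is precisely why the hypothesis $(\eta_1,\eta_3,\eta_4)\in\mathcal N_0$ was imposed. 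I would verify the prime-power values of $\widetilde\nu$ against Lemma~\ref{lem:nu} case by case ($p\nmid\eta_1\eta_3\eta_4$; $p\mid\eta_3\eta_4$; $p\mid\eta_1$, $p\ne2$; $p=2$ under $\mathcal N_0$) before assembling the Euler product, as a single sign or exponent error there would propagate.
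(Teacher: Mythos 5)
Your proposal follows essentially the same route as the paper's proof: compute $\widetilde{\nu}_{\eta_1,\eta_3,\eta_4}$ at prime powers, compare the Euler product of $V(s)$ with $\zeta(s)$ to obtain holomorphy and $0\leq\widetilde{V}(1)\ll 2^{\omega(\eta_1)}$ (the only unbounded-in-number corrections coming from $p\mid\eta_1$, with the $p=2$ case controlled by the $\mathcal{N}_0$ assumption), and then estimate $\sum_{n\leq X}\widetilde{\nu}(n)$ via the convolution $\widetilde{\nu}=(\widetilde{\nu}*\mu)*1$, exactly as in the paper. One repair to your last step: $g=\widetilde{\nu}*\mu$ is \emph{not} supported only on bad primes and squarefull numbers (it is squarefree-supported with $g(p)=-1/p$ at the good primes), so to reach the stated error $O(2^{\omega(\eta_1)}X^{\varepsilon})$ you should bound $\sum_{d\leq X}|g(d)|\leq X^{\varepsilon}\sum_{d}|g(d)|d^{-\varepsilon}$ and use convergence of this series at exponent $\varepsilon$ (the paper's $V^{+}(\varepsilon)\ll 2^{\omega(\eta_1)}$); convergence of $\sum_{d}|g(d)|/d^{1-\varepsilon}$ alone would only yield $O(2^{\omega(\eta_1)}X^{1-\varepsilon})$.
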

\begin{proof}
    For $p\neq2$, it is easy to see that
    \begin{align*}
    \widetilde{\nu}_{\eta_1,\eta_3,\eta_4}(p^k)& = \left\{
    \begin{array}{ll}
        \left(1- \frac{1}{p}\right) , &\quad p\nmid \eta_1\eta_3\eta_4, \\
        \left(\frac{1- 1/p}{1-2/p}\right) , &\quad p \mid
        \eta_1, p \nmid \eta_3\eta_4, \\
        0 , &\quad \mbox{otherwise}.
    \end{array}\right.
    \end{align*}
    Then by considering Euler products, one can check that $V(s)$ is
    equal to
    \begin{align*}
    \frac{\zeta(s)V'(s)}{\zeta(s+1)}
    \prod_{ p \mid \eta_1\eta_3\eta_4}
    \left(1 + \frac{1-1/p}{p^s-1}\right)^{-1}
    \prod_{\substack{p \mid \eta_1 , p \neq 2\\ p \nmid \eta_3\eta_4}}
    \left(1 + \frac{1-1/p}{(1-2/p)(p^s-1)}\right)
    \end{align*}
    where $V'(s)$ is some function corresponding to the Euler factor at the prime $2$.
    So $\widetilde{V}(s)$ has the properties stated in the lemma.
    Ignoring convergence issues for now, we have
    \begin{align*}
    \sum_{n \leq X} \widetilde{\nu}_{\eta_1,\eta_3,\eta_4}(n) & =  \sum_{n \leq X}
    ((\widetilde{\nu}_{\eta_1,\eta_3,\eta_4}*\mu)*1)(n)\\
    &=  \sum_{n \leq X} \sum_{d\mid n}(\widetilde{\nu}_{\eta_1,\eta_3,\eta_4}*\mu)(d) \\
    & =  X\sum_{d =1}^\infty \frac{(\widetilde{\nu}_{\eta_1,\eta_3,\eta_4}*\mu)(d)}{d}
         +O\left(X^{\varepsilon}\sum_{d =1}^\infty \frac{|(\widetilde{\nu}_{\eta_1,\eta_3,\eta_4}*\mu)(d)|}{d^\varepsilon}\right) \\
    \end{align*}
    where we have used the trivial bound $[x]=x + O(x^\varepsilon)$. To
    make this rigorous, first note that
    $$\lim_{s\to1}\sum_{d =1}^\infty \frac{(\widetilde{\nu}_{\eta_1,\eta_3,\eta_4}*\mu)(d)}{d^s}=
    \lim_{s\to1}V(s)\zeta(s)^{-1}=\widetilde{V}(1).$$

    Next, we need to find an expression for the Dirichlet series
    $V^+(s)$ of $|(\widetilde{\nu}_{\eta_1,\eta_3,\eta_4}*\mu)|$. It is easy to verify that

    $$(\widetilde{\nu}_{\eta_1,\eta_3,\eta_4}*\mu)(p^k) = \left\{
        \begin{array}{ll}
            \widetilde{\nu}_{\eta_1,\eta_3,\eta_4}(p) - 1, &\quad k=1,\\
            0,&\quad k>1.
        \end{array}\right.$$
    By considering Euler products, one can check that $V^+(s)$ is a holomorphic and
    bounded function of $s$ on $\re (s)>0$ and satisfies
    \begin{align*}
    \widetilde{V}^+(\varepsilon)
        & \ll
        \prod_{\substack{p \mid \eta_1 \\ p \nmid \eta_3\eta_4}}\left(1 +
        \frac{1}{(p-2)p^\varepsilon}\right) \ll 2^{\omega(\eta_1)}
    \end{align*}
    on this domain. Thus we are done.
\end{proof}

We shall now perform the summation over $\eta_2$, and to do this we
will need a slight abuse of notation. Namely, we define
$$\widetilde{E}(t) = E(\eta_1,t,\eta_3,\eta_4,k_1,B),$$ where $E$ is
given in Lemma~\ref{lem:F2E}, and for this we also need to think of
$X_5$ and $X_6$ as being functions of $\eta_2$. Recalling the
expression we had for $T_2$ as given in Lemma~\ref{lem:nu} and using
Lemma~\ref{lem:nu_2},  by partial summation we have
\begin{align*}
    &\sum_{ \eta_2 \leq \widetilde{X_5}} \nu_{\eta_1,\eta_3,\eta_4}(\eta_2) \eta_2^{1/3}\widetilde{E}(\eta_2) \\
    & = \widetilde{X_5}^{1/3}\widetilde{E}(\widetilde{X_5})
    \sum_{\eta_2 \leq \widetilde{X_5}} \nu_{\eta_1,\eta_3,\eta_4}(\eta_2)
    - \int_0^{\widetilde{X_5}} \sum_{\eta_2 \leq t} \nu_{\eta_1,\eta_3,\eta_4}(\eta_2) \mathrm{d}\left(t^{1/3}\widetilde{E}(t)\right)\\
    & = \widetilde{V}(1)\int_0^{\widetilde{X_5}}t^{1/3}\widetilde{E}(t)\mathrm{d}t +
    O\left(2^{\omega(\eta_1)} |\widetilde{E}(\widetilde{X_5})|
    \widetilde{X_5}^{1/3+\varepsilon}\right) \\
    &=\widetilde{V}(1)\widetilde{X_5}^{4/3}\int_0^1u^{1/3}\widetilde{E}(u\widetilde{X_5})\mathrm{d}u
    + O\left(B^\varepsilon |\widetilde{E}(\widetilde{X_5})|
    \widetilde{X_5}^{1/3+\varepsilon}\right).
\end{align*}

We now note an interesting feature, namely that
$\widetilde{E}(\widetilde{X_5})$ is actually independent of $B$.
Indeed, viewing $X_5$ and $X_6$ as functions of $\eta_2$, we find
that $X_5(u\widetilde{X_5})=u^{2/3}$ and
$X_6(u\widetilde{X_5})=\eta_1\eta_3\eta_4/u^{2/3}$. Hence
$\widetilde{E}(u\widetilde{X_5})$ is independent of $B$ and moreover
by (\ref{bounds:F2E}) we deduce that
$$ \widetilde{E}(u\widetilde{X_5}) \leq \frac{6}{u^{1/3}}.$$
Hence referring back to (\ref{def:T1}), the overall error term
contribution to $T_1(B)$ in this case is
\begin{align*}
    &\ll B^{1/3+\varepsilon}\sum_{\eta_1^4\eta_3^3\eta_4^3 \leq B}
    \eta_1^{2/3} 2^{\omega(\eta_1\eta_3\eta_4)} \widetilde{X_5}^{1/3+\varepsilon} \\
    &\ll B^{1/2+\varepsilon}\sum_{\eta_1^4\eta_3^3\eta_4^3 \leq B}
    \frac{1}{\eta_3^{1/2}\eta_4^{1/2}}  \ll B^{3/4 + \varepsilon}
\end{align*}
which is satisfactory. Now we can finally make the main term of
$T_1$ explicit, which in the case $ (\eta_1,\eta_3,\eta_4) \in
\mathcal{N}_0$ is
\begin{align*}
    B\sum_{{\eta_1,\eta_3,\eta_4 \in \mathcal{N}_0}}
    \widetilde{V}(1)\frac{\psi(\eta_1,\eta_3,\eta_4)}{\eta_1^2\eta_3^2\eta_4^2}
    \sum_{k_1\mid\eta_1\eta_3\eta_4}\mu(k_1)
    \int_0^1u^{1/3}\widetilde{E}(u\widetilde{X_5})\mathrm{d}u.
\end{align*}
We know that $\widetilde{E}(u\widetilde{X_5})\leq 6/u^{1/3}$ is
actually independent of $B$, so letting the sum over the $\eta_i$ go
to infinity, we get a main term of the form $\lambda'B$ where
$\lambda' \in \mathbb{R}$ is some constant and an error term of the
order
\begin{align*}
    &\ll B^{1+\varepsilon}\sum_{\eta_1^4\eta_3^3\eta_4^3 > B}
    \frac{1}{\eta_1^2\eta_3^2\eta_4^2}
    \ll B^{3/4+\varepsilon}
\end{align*}
which is satisfactory. This was only for the case
$(\eta_1,\eta_3,\eta_4) \in \mathcal{N}_0$, however it is clear that
the sum over the case where $(\eta_1,\eta_3,\eta_4) \in
\mathcal{N}_1$ is almost exactly the same and hence it is omitted.
So returning to the original problem (\ref{def:T1}), we have shown
that there exists a constant $\lambda \in \mathbb{R}$ such that
$$T_1(B)=\lambda B + O(B^{3/4+\varepsilon}).$$

\subsection{Summation over the $\eta_i$}
We now know that
\begin{align*}T(B)=&\sum_{\bfeta \in \mathcal{N}}
        \frac{\vartheta(\bfeta) F_2(X_5)}{\eta_4X_3X_6} + \lambda B + O(B^{3/4+\varepsilon}),
\end{align*}
where $X_3,X_5,X_6$ and $\mathcal{N}$ are given by (\ref{height1})
and (\ref{bigeta}), $F_2$ is as in Lemma~\ref{lem:F2E}, and we
define

$$\vartheta(\bfeta)=\sum_{\substack{k_3\mid\eta_1 \\
(k_3,\eta_2\eta_3)=1}}\frac{\mu(k_3)}{k_3}
\sum_{\substack{k_2\mid\eta_1\eta_2 \\ (k_2,k_3\eta_4)=1}}
\frac{\mu(k_2)}{k_2}
\sum_{k_1\mid\eta_1\eta_3\eta_4}\frac{\mu(k_1)}{k_1}$$ when
$(\eta_2,\eta_3)=(\eta_2,\eta_4)=(\eta_3,\eta_4)=1$ and
$\vartheta(\bfeta) = 0$ otherwise. We have already simplified a very
similar sum in Lemma~\ref{lem:nu}, and using a similar method one
can check that
\begin{align}
    \vartheta(\bfeta) & =\phi^*(\eta_1)\phi^*(\eta_2)\phi^*(\eta_3)\phi^*(\eta_4)
    \prod_{\substack{p\mid \eta_1 \\ p \nmid \eta_2\eta_3\eta_4}}
    \left(1-\frac{2}{p}\right) \label{vartheta}
\end{align}
when $(\eta_2,\eta_3)=(\eta_2,\eta_4)=(\eta_3,\eta_4)=1$ and
$\vartheta(\bfeta) = 0$ otherwise. Recalling the height conditions
(\ref{height1}) it follows that

\begin{align*}
        T(B)&
         =B^{2/3}\sum_{n \leq B}\Delta(n) F_2\left(\left(\frac{n}{B}\right)^{1/3}\right)
          + \lambda B + O(B^{3/4+\varepsilon})
\end{align*}
where
\begin{equation}
\Delta(n)=\sum_{\eta_1^4\eta_2^2\eta_3^3\eta_4^3=n}
\vartheta(\bfeta) \left(\frac{\eta_1}{\eta_2}\right)^{1/3}.
\label{Delta}
\end{equation}
Hence we have the expression
\begin{equation}\label{N(B)}
N_{U,H}(B)=2B^{2/3}\sum_{n \leq B}\Delta(n)
F_2\left(\left(\frac{n}{B}\right)^{1/3}\right)
+\left(\frac{12}{\pi^2} + 2\lambda\right)B + O(B^{3/4+\varepsilon})
\end{equation}
for the counting function.

\subsection{The Height Zeta Function}
In this section we shall prove Theorem~\ref{thm:HZF} on the height
zeta function $Z_{U,H}(s)$ as defined in (\ref{HZF}). A standard
application of Perron's formula \cite[Lemma 3.12]{Tit86} gives us an
expression for the counting function $N_{U,H}(B)$ in terms of the
zeta function via an inverse Mellin transform. Then performing the
corresponding Mellin transform tells us that for $\re (s)\gg1$ we
have
\begin{equation}
    Z_{U,H}(s)=s\int_1^\infty u^{-s-1}N_{U,H}(u)\mathrm{d}u \label{zeta
    Mellin}
\end{equation}
where $s=\sigma + it$ is a complex variable. Recalling (\ref{N(B)}),
we have $Z_{U,H}(s) = Z_1(s) + Z_2(s)$ where
\begin{align}
    Z_1(s) &= 2s\int_1^\infty u^{-s-1/3} \sum_{n \leq u}\Delta(n)
    F_2\left(\left(\frac{n}{u}\right)^{1/3}\right) \mathrm{d}u, \nonumber\\
    Z_2(s) &= \frac{12/\pi^2 + 2\lambda}{s-1} + G_2(s),\label{G2}\\
    G_2(s) &= s\int_1^\infty u^{-s-1}R(u)\mathrm{d}u,\nonumber
\end{align}
and $R(u)$ is some function such that $R(u)\ll u^{3/4+\varepsilon}$
for all $\varepsilon>0$. From this it follows that $G_2(s)$ is
holomorphic on the half-plane $\re(s)\geq 3/4+\varepsilon$, and
moreover
\begin{align*}
    G_2(s)& \ll|s|\int_1^\infty u^{-\sigma -1}u^{3/4+\varepsilon}\mathrm{d}u
    \ll\frac{|1 + i\frac{t}{\sigma}|}{|\frac{3}{4\sigma}-1|}
    \ll 1 + |t|
\end{align*}
on this domain (note that here we use the common abuse of notation
that $\varepsilon$ is allowed to take different values
simultaneously). In particular $Z_2(s)$ has a meromorphic
continuation to the same half-plane with a simple pole at $s=1$ of
residue $12/\pi^2 + 2\lambda$.

Now that $Z_2(s)$ is under control, let us turn our attention to
$Z_1(s)$. Define $\Delta$'s Dirichlet series by
$D(s)=\sum_{n=1}^\infty \Delta(n) n^{-s}$. Then by choosing a
suitable $s$ to make sure that change of sum and integral are valid,
we can simplify $Z_1$  by
\begin{align}
    Z_1(s) &= 2s\sum_{n=1}^\infty\Delta(n)\int_n^\infty u^{-s-1/3}
    F_2\left(\left(\frac{n}{u}\right)^{1/3}\right) \mathrm{d}u \nonumber \\
    &=2sD\left(s-\frac{2}{3}\right)\int_1^\infty u^{-s-1/3}
    F_2\left(\left(\frac{1}{u}\right)^{1/3}\right) \mathrm{d}u \nonumber \\
    &=D\left(s-\frac{2}{3}\right)G_{1,1}(s), \label{Z1}
\end{align}
where
\begin{align}
G_{1,1}(s) & = 6s\int_0^1 u^{3(s-1)}F_2(u)\mathrm{d}u\label{G11}.
\end{align}
A standard application of \cite[Lemma 4.3]{Tit86} combined with
(\ref{bounds:F2E}) now tells us that $G_{1,1}(s)$ is a bounded and
holomorphic function on the half-plane $\re(s) > 5/6$. Recalling the
definition of $\Delta$ in (\ref{Delta}), we find that
\begin{align*}
    D(s + 1/3)&=
    \sum_{\eta_1,\eta_2,\eta_3,\eta_4=1}^\infty\frac{\vartheta(\eta_1,\eta_2,\eta_3,\eta_4)}
    {\eta_1^{4s+1}\eta_2^{2s+1}\eta_3^{3s+1}\eta_4^{3s+1}}\\
    &= \prod_p \sum_{k_i=0}^\infty
    \frac{\vartheta(p^{k_1},p^{k_2},p^{k_3},p^{k_4})}{p^{(4s+1)k_1 +
    (2s+1)k_2 + (3s+1)(k_3+k_4)}}.
\end{align*}
After recalling the expression for $\vartheta(\bfeta)$ in
(\ref{vartheta}) and using the fact that $\vartheta(\bfeta) \neq 0$
if and only if $(n_2,n_3)=(n_2,n_4)=(n_3,n_4)=1$, this sum greatly
simplifies and it is easy to see that  $D(s + 1/3) = \prod_p D_p(s +
1/3)$ where
\begin{align*}
    D_p(s + 1/3)  =1 +
    &\left(1 - \frac{1}{p} \right)
    \left(\frac{1}{p^{2s+1}-1} +
    \frac{2}{p^{3s+1}-1} +
    \frac{1-2/p}{p^{4s+1}-1}
    \right) \\
    +&
    \left(1-\frac{1}{p}\right)^2
    \left(\frac{1}{p^{4s+1}-1}\right)
    \left(\frac{1}{p^{2s+1}-1} +
    \frac{2}{p^{3s+1}-1}
    \right).
\end{align*}
Recalling the definition of $E_1(s)$ and $E_2(s)$ given by
(\ref{E12}), we can prove the following.

\begin{lem} \label{lem:G12}
    We have
        $$D(s+1/3) = E_1(s+1)E_2(s+1)G_{1,2}(s+1)$$
    where $G_{1,2}(s+1)$ is holomorphic and bounded on the half-plane
    $\mathcal{H}=\{s \in \mathbb{C} : \re(s)\geq-1/3 + \varepsilon\}$.
\end{lem}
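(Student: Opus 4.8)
The plan is to work entirely with the local Euler factors $D_p(s+1/3)$ computed just above the lemma, and to show that dividing out the announced zeta factors leaves an Euler product which converges in a larger half-plane. First I would expand $D_p(s+1/3)$ as a power series in $p^{-1}$ in the relevant variables: writing $u_j = p^{-js-?}$ is awkward, so instead I would set $w = 1/p$ and expand each factor $1/(p^{\ell s + 1}-1) = p^{-\ell s - 1}(1 + p^{-\ell s -1} + \cdots)$, keeping track of the \emph{leading} term in each block. The key observation is that the "first-order" part of $D_p$, i.e. the sum of all contributions that are (up to the harmless $(1-1/p)$ factors, which are themselves $1 + O(1/p)$) of size $p^{-\ell s - 1}$ for a single $\ell$, is precisely
\[
1 + \frac{1}{p^{2s+1}} + \frac{2}{p^{3s+1}} + \frac{1}{p^{4s+1}},
\]
whose Euler product over $p$ is exactly $\zeta(2s+1)\zeta(3s+1)^2\zeta(4s+1) = E_1(s+1)$. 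So $E_1(s+1)$ is the natural first approximation to $D(s+1/3)$.

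Next I would form the quotient $D(s+1/3)/E_1(s+1) = \prod_p D_p(s+1/3)(1-p^{-2s-1})(1-p^{-3s-1})^2(1-p^{-4s-1})$ and expand the local factor. By construction all terms of size $p^{-\ell s - 1}$ (single $\ell \in \{2,3,4\}$) cancel, so the local factor is now $1 + (\text{terms of size at most } p^{-7s-2})$: the smallest surviving exponents come either from products of two of the original $1/(p^{\ell s+1}-1)$ blocks (giving $p^{-(2s+1)-(4s+1)} = p^{-6s-2}$, $p^{-(3s+1)-(4s+1)} = p^{-7s-2}$, $p^{-(3s+1)-(3s+1)}=p^{-6s-2}$, and the term $\tfrac{-2/p}{p^{4s+1}-1}$ which is $p^{-4s-2}$), from the second-order tails $p^{-2\ell s - 2}$, and from the $(1-1/p)$ corrections multiplying a leading $p^{-\ell s -1}$ (giving $p^{-\ell s -2}$). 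A careful bookkeeping of \emph{all} terms of the next few orders — this is the routine but genuinely laborious heart of the proof — identifies which zeta factors are needed to clean them up: the $\zeta(7s+3)^4$ and $\zeta(8s+3)^2$ in the numerator of $E_2$ absorb the $p^{-7s-3}$- and $p^{-8s-3}$-type terms, while the denominators $\zeta(4s+2)^3\zeta(5s+2)^2\zeta(6s+2)\zeta(10s+4)$ absorb the $p^{-4s-2}$, $p^{-5s-2}$, $p^{-6s-2}$ and $p^{-10s-4}$ terms. After multiplying by the appropriate $E_2(s+1)^{-1}$ factors one reads off that the local factor of $G_{1,2}(s+1)$ is $1 + O(p^{-(9s+3)} + p^{-(9s+4)} + \cdots)$ — more precisely $1 + O(|p|^{-\sigma'})$ for some $\sigma' > 1$ whenever $\re(s) > -1/3$.

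The final step is to conclude convergence: on $\mathcal H = \{\re(s) \ge -1/3 + \varepsilon\}$ each local factor of $G_{1,2}(s+1)$ equals $1 + O_\varepsilon(p^{-1-\delta})$ for some $\delta = \delta(\varepsilon) > 0$ (since the worst surviving exponent, once the zeta factors are removed, is strictly below $-1$ when $\re(s) > -1/3$), so the Euler product converges absolutely and locally uniformly there; hence $G_{1,2}(s+1)$ is holomorphic and bounded on $\mathcal H$. It remains only to double-check that no spurious poles of the $E_i$ lie inside $\mathcal H$ and are not cancelled by $G_{1,2}$: since $G_{1,2}$ is given by an absolutely convergent Euler product with no zeros on $\mathcal H$ apart from possibly the edge, and $E_2(s+1)$ is already known from the discussion after (\ref{E12}) to be holomorphic on $\re(s) > -1/4$, the only pole of $D(s+1/3)$ in this region is the order-$4$ pole at $s=0$ coming from $E_1(s+1)$, consistent with $\rho = 4$. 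The main obstacle is purely the combinatorial accounting in the middle step: one must be sure to collect \emph{every} term in the expansion of $D_p(s+1/3)(1-p^{-2s-1})(1-p^{-3s-1})^2(1-p^{-4s-1})$ down to order $p^{-8s-3}$ (and the mixed $p^{-\ell s - 2}$, $p^{-10s-4}$ terms), match each against the prescribed zeta factor, and verify the cancellations are exact — an error here would change the shape of $E_2$.
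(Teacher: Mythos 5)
Your overall strategy coincides with the paper's: expand the local factor $D_p(s+1/3)$, recognise $E_1(s+1)=\zeta(2s+1)\zeta(3s+1)^2\zeta(4s+1)$ as the first-order approximation, divide out the Euler factors of $E_1(s+1)$ and $E_2(s+1)$, and show the remaining local factor is $1+O(1/p^{1+\varepsilon})$ uniformly on $\mathcal{H}$ so the product converges to a bounded holomorphic function. The problem is that the step you defer as ``routine but laborious'' \emph{is} the lemma, and the trial statements you make about its outcome are wrong in ways that matter. First, you list $p^{-(3s+1)-(4s+1)}=p^{-(7s+2)}$ among the smallest surviving exponents of $D_p/E_{1,p}$. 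On $\mathcal{H}$ one has $\re(7s+2)\geq -1/3+7\varepsilon$, so such a term is not negligible (it can even grow with $p$), and $E_2$ contains no $\zeta(7s+2)$ factor to absorb it; if it genuinely survived, the lemma as stated would be false. In fact it cancels identically: collecting the coefficient of $p^{-(7s+2)}$ in $D_p(s+1/3)\,(1-p^{-(2s+1)})(1-p^{-(3s+1)})^2(1-p^{-(4s+1)})$, the four terms of $D_p$ contribute $+2,-2,-2,+2$, summing to zero — and verifying precisely such cancellations is the content of the proof. Second, your final convergence claim rests on the assertion that the leftover is $1+O(p^{-(9s+3)}+\cdots)$ with exponent of real part exceeding $1$ whenever $\re(s)>-1/3$; but $\re(9s+3)\geq 1$ only for $\re(s)\geq -2/9$, and near $\re(s)=-1/3$ it is close to $0$. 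So either no such term survives (which you have not checked) or your argument yields holomorphy only on $\re(s)>-2/9$, not on the claimed half-plane. Since $\mathcal{H}$ is exactly the boundary at which monomials $p^{-(as+b)}$ with $b-a/3\leq 1$ stop being negligible, no argument by size alone can replace the exact identification of which of them occur.

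For comparison, the paper carries out this bookkeeping explicitly: it first shows
$D_p(s+1/3)\bigl(1-p^{-(4s+1)}\bigr)=1-3p^{-(4s+2)}+2p^{-(4s+3)}+\bigl(1-\tfrac{1}{p}\bigr)\bigl(1-p^{-(4s+2)}\bigr)\bigl(\tfrac{1}{p^{2s+1}-1}+\tfrac{2}{p^{3s+1}-1}\bigr)$,
then discards only terms whose exponents have real part at least $1+\varepsilon$ on $\mathcal{H}$, and arrives at
$D_p(s+1/3)/E_{1,p}(s+1)=1-3p^{-(4s+2)}-2p^{-(5s+2)}-p^{-(6s+2)}+4p^{-(7s+3)}+2p^{-(8s+3)}-p^{-(10s+4)}+O(1/p^{1+\varepsilon})$,
which matches the first-order expansion of the Euler factor of $E_2(s+1)$ term by term (multiplicities $3,2,1,1$ in the denominator, $4,2$ in the numerator), whence $D_p/(E_{1,p}E_{2,p})=1+O(1/p^{1+\varepsilon})$ and the conclusion follows. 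To make your proposal into a proof you must perform this exact-term identification — keeping every monomial down to real part $1$ on $\mathcal{H}$, verifying the cancellations (such as the $p^{-(7s+2)}$ one above), and confirming that what survives is exactly the list matching $E_2$ — rather than estimating the tail by its apparent leading exponent.
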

\begin{proof}
    Defining $G_{1,2}(s+1)=D(s+1/3)/(E_1(s+1)E_2(s+1))$, it is clear that it will be
    enough to show that $G_{1,2}(s+1) = \prod_p(1 + O(1/p^{1+\varepsilon}))$
    on $\mathcal{H}$. A routine calculation tells us that
    \begin{align*}
        D_p(s+1/3)&\left(1 - \frac{1}{p^{4s+1}}\right) =
        1 - \frac{3}{p^{4s+2}} + \frac{2}{p^{4s+3}} \\
        &+\left(1-\frac{1}{p}\right)\left(1 - \frac{1}{p^{4s+2}}\right)\left(\frac{1}{p^{2s+1}-1} + \frac{2}{p^{3s+1}-1}
        \right).
    \end{align*}
    Now on $\mathcal{H}$ we have the following estimates
    \begin{align*}
        \frac{1}{p^{4s+2}} & =
        O\left(\frac{1}{p^{2/3+\varepsilon}}\right) , \quad
        \frac{1}{p^{2s+1}-1} =
        O\left(\frac{1}{p^{1/3+\varepsilon}}\right), \\
        \frac{1}{p^{4s+3}} & =
        O\left(\frac{1}{p^{5/3+\varepsilon}}\right), \quad
        \frac{1}{p^{3s+1}-1} =
        O\left(\frac{1}{p^{\varepsilon}}\right).
    \end{align*}
    So on $\mathcal{H}$ we have
    \begin{align*}
        D_p(s+1/3)\left(1 - \frac{1}{p^{4s+1}}\right) &=
        1 - \frac{3}{p^{4s+2}} + \frac{1}{p^{2s+1}-1} \\
        &+\frac{2}{p^{3s+1}-1}\left(1 - \frac{1}{p^{4s+2}}\right) +
        O\left(\frac{1}{p^{1+\varepsilon}}\right).
    \end{align*}
    And finally an easy calculation gives us
    \begin{align*}
        \frac{D_p(s+1/3)}{E_{1,p}(s+1)}= &1 - \frac{3}{p^{4s+2}}
        - \frac{2}{p^{5s+2}} - \frac{1}{p^{6s+2}} +
        \frac{4}{p^{7s+3}} \\
        & \quad+ \frac{2}{p^{8s+3}} - \frac{1}{p^{10s+4}}
        + O\left(\frac{1}{p^{1+\varepsilon}}\right)
    \end{align*}
    where $E_{1,p}(s+1)$ is the corresponding Euler factor of
    $E_1(s+1)$, thus proving the claim.
\end{proof}
Thus letting
\begin{align}
    G_1(s)=G_{1,1}(s)G_{1,2}(s) \label{G1}
\end{align}
and combining (\ref{G2}),(\ref{Z1}) and (\ref{G11}) with Lemma
\ref{lem:G12}, we have proved Theorem~\ref{thm:HZF}.
\subsection{The Asymptotic Formula}
In this section we shall prove Theorem~\ref{thm:asym}. Our starting
point is the expression for the counting function given by
(\ref{N(B)}), which we shall simplify using partial summation and
the properties of the Dirichlet series $D(s)$ deduced in
Lemma~\ref{lem:G12}. In what follows let $M(B) = \sum_{n \leq B}
\Delta(B)$.

\begin{lem}\label{lem:sum_Delta}
    We have
    $$M(B) = \frac{E_2(1)G_{1,2}(1)}{144}B^{1/3}Q(\log
    B) + O(B^{7/8-2/3 + \varepsilon})$$
    where $Q \in \mathbb{R}[x]$ is some monic cubic polynomial.
\end{lem}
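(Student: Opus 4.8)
\emph{Plan.} The idea is to read off the asymptotics of $M(B)=\sum_{n\le B}\Delta(n)$ from the analytic behaviour of the Dirichlet series $D(s)=\sum_{n\ge1}\Delta(n)n^{-s}$, after first smoothing the sharp cutoff. By Lemma~\ref{lem:G12} (with $s$ replaced by $s-\tfrac23$) one has $D(s)=E_1(s+\tfrac23)E_2(s+\tfrac23)G_{1,2}(s+\tfrac23)$, and since $E_1(u)=\zeta(4u-3)\zeta(3u-2)^2\zeta(2u-1)$ this reads $E_1(s+\tfrac23)=\zeta(4s-\tfrac13)\zeta(3s)^2\zeta(2s+\tfrac13)$. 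Hence $D(s)$ is holomorphic on $\re(s)>\tfrac1{12}$ except for a pole of order $4$ at $s=\tfrac13$, with $\lim_{s\to1/3}(s-\tfrac13)^4D(s)=\tfrac{E_2(1)G_{1,2}(1)}{4\cdot9\cdot2}=\tfrac{E_2(1)G_{1,2}(1)}{72}$. On the line $\re(s)=\tfrac5{24}$ the three zeta-arguments $4s-\tfrac13,\ 3s,\ 2s+\tfrac13$ have real parts $\tfrac12,\ \tfrac58,\ \tfrac34$, while $E_2(s+\tfrac23)$ is given there by an absolutely convergent Euler product and $G_{1,2}(s+\tfrac23)\ll1$ by Lemma~\ref{lem:G12}; so $\bigl|D(\tfrac5{24}+it)\bigr|\ll|\zeta(\tfrac12+4it)|\,|\zeta(\tfrac58+3it)|^2\,|\zeta(\tfrac34+2it)|$.

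Next I would fix $\eta=B^{-1/8}$ and pick $\phi_\pm\in C^\infty$ with $\phi_-\le\mathbf 1_{[0,1]}\le\phi_+$, $\operatorname{supp}\phi_+\subseteq[0,1+\eta]$, $\phi_-\equiv1$ on $[0,1-\eta]$, and $\|\phi_\pm^{(j)}\|_\infty\ll_j\eta^{-j}$. Then $\sum_n\Delta(n)\phi_-(n/B)\le M(B)\le\sum_n\Delta(n)\phi_+(n/B)$, and for $c>\tfrac13$,
$$\sum_n\Delta(n)\phi_\pm(n/B)=\frac1{2\pi i}\int_{(c)}D(s)\widehat\phi_\pm(s)B^s\,\mathrm ds,\qquad\widehat\phi_\pm(s)=\int_0^\infty\phi_\pm(u)u^{s-1}\mathrm du.$$
Integration by parts $k$ times gives $\widehat\phi_\pm(\sigma+it)\ll_k(1+|t|)^{-1}\min\bigl(1,(|t|\eta)^{-k}\bigr)$ uniformly for $\sigma$ in a fixed compact set, and near $s=\tfrac13$ one has $\widehat\phi_\pm(s)=\tfrac1s+R_\pm(s)$ with $R_\pm$ holomorphic and $R_\pm^{(j)}(s)\ll\eta^{1+j}$. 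I would then move the contour to $\re(s)=\tfrac5{24}$; the rapid decay of $\widehat\phi_\pm$ kills the horizontal pieces, and the only pole crossed is $s=\tfrac13$ (the pole of $\widehat\phi_\pm$ at $s=0$ and the remaining singularities of $D$ lie in $\re(s)<\tfrac5{24}$). Its residue is $\mathrm{Res}_{s=1/3}D(s)\tfrac{B^s}{s}+\mathrm{Res}_{s=1/3}D(s)R_\pm(s)B^s$: the first term equals $\tfrac{E_2(1)G_{1,2}(1)}{144}B^{1/3}Q(\log B)$ for a \emph{monic} cubic $Q\in\mathbb R[x]$ (the $(\log B)^3$-coefficient, from differentiating $B^s$ thrice, is $\tfrac16\cdot\tfrac{E_2(1)G_{1,2}(1)}{72}\cdot\tfrac1{1/3}=\tfrac{E_2(1)G_{1,2}(1)}{144}$, and the lower coefficients are real), and the second is $\ll\eta B^{1/3}(\log B)^3=B^{5/24}(\log B)^3$.

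The crux is the shifted integral. Splitting the four zeta-factors above into fourth powers, Hölder's inequality together with the fourth moment bounds $\int_0^T|\zeta(\tfrac12+it)|^4\mathrm dt\ll T(\log T)^4$ and $\int_0^T|\zeta(\sigma+it)|^4\mathrm dt\ll_\sigma T$ for $\sigma>\tfrac12$ give $\int_T^{2T}\bigl|D(\tfrac5{24}+it)\bigr|\mathrm dt\ll T^{1+\varepsilon}$. A dyadic decomposition combined with the decay of $\widehat\phi_\pm$ then yields $\int_{(5/24)}\bigl|D(s)\widehat\phi_\pm(s)B^s\bigr|\,|\mathrm ds|\ll B^{5/24}\sum_{T\ \mathrm{dyadic}}T^\varepsilon\min(1,(T\eta)^{-k})\ll B^{5/24+\varepsilon}$. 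Sandwiching $M(B)$ between the two smoothed sums and using $\eta B^{1/3}(\log B)^3\ll B^{5/24+\varepsilon}$ gives $M(B)=\tfrac{E_2(1)G_{1,2}(1)}{144}B^{1/3}Q(\log B)+O(B^{5/24+\varepsilon})$, and $5/24=7/8-2/3$ is the asserted exponent.

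The main obstacle is exactly this vertical estimate: neither the convexity nor the known subconvexity bound for $\zeta$ on $\re(s)=\tfrac5{24}$ is strong enough, so it is essential to use the fourth power moment on the critical line. The numerology is tight and forces the choices: taking $\tfrac5{24}=\tfrac13-\tfrac18$ places the worst factor $\zeta(4s-\tfrac13)$ precisely on $\re=\tfrac12$ where the sharp fourth moment is available, and the same $\tfrac18$ makes the smoothing error $\eta B^{1/3}(\log B)^3$ no larger than $B^{5/24+\varepsilon}$. (A minor auxiliary point to record along the way is the crude bound $M(B)\ll B^{1/3}(\log B)^3$, proved by a one-line iterated summation over the $\eta_i$, which is all that is needed to control the smoothed sums.)
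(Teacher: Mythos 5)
Your argument is correct, and it rests on the same central device as the paper's proof: Perron/Mellin inversion followed by a contour shift to the abscissa where the worst zeta factor of $D$ sits exactly on the critical line (your line $\re(s)=5/24$ is the paper's line $\re(s)=7/8$ after the change of variable $s\mapsto s+2/3$), with the vertical integral controlled by H\"older's inequality and fourth-moment bounds for $\zeta$ --- precisely the paper's step $J(U)\le J_4(U)^{1/4}J_3(U)^{1/2}J_2(U)^{1/4}$, with the same three real parts $1/2$, $5/8$, $3/4$. Where you differ is the packaging: the paper uses the sharp-cutoff Perron formula truncated at $T=B$, so it must also estimate two horizontal segments, for which it invokes the subconvexity bound $\zeta(\sigma+it)\ll|t|^{(1-\sigma)/3+\varepsilon}$; you instead smooth the cutoff with majorants/minorants $\phi_\pm$ at scale $\eta=B^{-1/8}$, which eliminates the truncation error and the horizontal segments (and any subconvexity input) at the cost of two facts you use implicitly and should record: (i) the sandwich $\sum_n\Delta(n)\phi_-(n/B)\le M(B)\le\sum_n\Delta(n)\phi_+(n/B)$ requires $\Delta(n)\ge0$, which does hold by (\ref{vartheta}) since every factor there is nonnegative; and (ii) discarding the horizontal pieces when moving from $(c)$ to $(5/24)$ uses polynomial growth of $D$ in the strip, which follows from convexity bounds for the $\zeta$-factors together with the boundedness of $E_2$ and $G_{1,2}$ there. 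Your residue computation ($\lim_{s\to1/3}(s-\tfrac13)^4D(s)=E_2(1)G_{1,2}(1)/72$, hence leading coefficient $E_2(1)G_{1,2}(1)/144$) and the balance $\eta B^{1/3}(\log B)^3\ll B^{5/24+\varepsilon}=B^{7/8-2/3+\varepsilon}$ reproduce exactly the paper's main term and error exponent; in short, your route trades the paper's subconvexity estimate and $B^{1+\varepsilon}/T$ truncation term for positivity and smoothing bookkeeping, and both are sound.
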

\begin{proof}
    Letting  $T \in [1,B]$, Perron's formula \cite[Theorem 3.12]{Tit86} tells us that for
    non-integral $B$ we have
    $$M(B) = \frac{1}{2\pi
    i}\int_{1/3 + \varepsilon-iT}^{1/3 +
    \varepsilon+iT}D(s)\frac{B^s}{s}ds + O\left(\frac{B^{1+\varepsilon}}{T}\right).$$
    Changing variables and using Lemma~\ref{lem:G12} we deduce that
    $$M(B) = \frac{1}{2\pi
    iB^{2/3}}\int_{1 + \varepsilon-iT}^{1 +
    \varepsilon+iT}E_1(s)E_2(s)G_{1,2}(s)\frac{B^s}{s-2/3}ds +
    O\left(\frac{B^{1+\varepsilon}}{T}\right).$$
    Now let $a \in [7/8,1)$ and let $\Gamma$ be the rectangular contour
    through the points $a-iT,a+iT,1+\varepsilon - iT,1+\varepsilon +
    iT$. Then, as we have already shown, $E_2(s)$ and $G_{1,2}(s)$ are
    holomorphic and bounded inside this contour, and $E_1(s)$ has a pole
    of order $4$ at $s=1$. Recalling that $\zeta(s)$ has a simple pole
    of order $1$ at $s=1$ with residue $1$, we have
    $\lim_{s\to1}E_1(s)(s-1)^4=4\cdot3^2\cdot2=72$. Also we have the
    following Taylor series
    $$B^s=B\sum_{n=1}^\infty \frac{(\log B)^n (s-1)^n}{n!}$$
    which gives us the residue
    $$\mbox{Res}_{s=1}\left\{E_1(s)E_2(s)G_{1,2}(s)\frac{B^s}{s-2/3}\right\}
    = \frac{E_2(1)G_{1,2}(1)}{144}BQ(\log B)$$ where $Q \in \mathbb{R}[x]$
    is some monic cubic polynomial. So letting $$\mathcal{E}(s)
    =\sum_{n \leq B} \Delta(n) - \frac{E_2(1)G_{1,2}(1)}{144}B^{1/3}Q(\log
    B)$$ and applying Cauchy's
    residue theorem to the contour $\Gamma$, we deduce that

    \begin{align*}
    \mathcal{E}(s) &\ll B^{-2/3}\left(\int_{a-iT}^{a+iT} +
    \int_{a-iT}^{1+\varepsilon-iT} +
    \int_{1+\varepsilon+iT}^{a+iT}\right)
    \left|E_1(s)\frac{B^s}{s}\right|ds + \frac{B^{1+\varepsilon}}{T}.
    \end{align*}
    From \cite[Ch. II.3.4, Theorem 6]{Ten95} we have the bound
    $$\zeta(\sigma + it) \ll
     |t|^{(1-\sigma)/3 + \varepsilon}, \quad \mbox{ if } \sigma\in [1/2,1]. $$
    Note that our choice of $a$ implies that in the strip $a < \re(s) <
    1$, we have $4\sigma-3, 3\sigma-2, 2\sigma-1
    >1/2$, so $|E_1(s)| \ll |t|^{4(1-\sigma)+\varepsilon}.$ Then the
    contribution from the first horizontal contour is
    \begin{align*}
        \int_{a-iT}^{1+\varepsilon-iT}\left|E_1(s)\frac{B^s}{s}\right|ds
        & \ll \int_{a}^{1+\varepsilon}T^{3-4\sigma+\varepsilon}B^{\sigma}d\sigma
        \\
        &\ll \frac{B^{1+\varepsilon}T^{\varepsilon}}{T} +
        B^aT^{3-4a+\varepsilon},
    \end{align*}
    and the same bound is obtained for the other horizontal contour. For
    the vertical contour we will use well-known estimates for the fourth
    moment of the zeta function. First note that
    $$\int_{a-iT}^{a+iT} \left|E_1(s)\frac{B^s}{s}\right|ds
    \ll B^a\int_{-T}^T \frac{|E_1(a+it)|}{1+|t|}\mathrm{d}t.$$ Now let
    $0 < U \ll T$ and consider the following dyadic interval
    $$\int_{U}^{2U} \frac{|E_1(a+it)|}{1+|t|}\mathrm{d}t
    \ll \frac{1}{U}\int_{U}^{2U} |E_1(a+it)|\mathrm{d}t =
    \frac{J(U)}{U},$$ say. H\"{o}lder's inequality now tells us that
    $$J(U) \leq J_4(U)^{1/4} J_3(U)^{1/2} J_2(U)^{1/4}$$
    where $J_k(U)=\int_{U}^{2U}|\zeta(k(a -1)+1+kit)|^4\mathrm{d}t$. Now
    by convexity \cite[Ch. VII.8]{Tit86} and the fact that we have
    $\int_0^T|\zeta(1/2+it)|^4 \ll T\log^4T$ by \cite[Th. 1]{HB79}, we
    see that for $\sigma\in [1/2,1]$ we have
    $$\int_U^{2U}|\zeta(\sigma+it)|^4\mathrm{d}t \ll U^{1+\varepsilon}.$$
    Hence we deduce that $J(U) \ll U^{1+\varepsilon}$. Now summing over
    these dyadic intervals we find
    $$\int_0^T \frac{|E_1(a+it)|}{1+|t|}\mathrm{d}t \ll
    T^{\varepsilon}.$$ The same estimate holds over the interval
    $[-T,0]$, and so putting everything together we find an overall
    error of $$\mathcal{E}(s) \ll \frac{B^{1+\varepsilon}}{T} + B^{a -2/3 +
    \varepsilon}.$$ Taking $T=B,a=7/8+\varepsilon$, the
    error we obtain is satisfactory for the
    lemma.
\end{proof}
Using this lemma we can deduce the following.

\begin{lem}\label{lem:sum_Delta_F2}
    We have
    \begin{align*}
        &\sum_{n \leq B}\Delta(n) F_2\left(\left(\frac{n}{B}\right)^{1/3}\right) \\
        & =\frac{E_2(1)G_{1,2}(1)}{144}\left(\int_0^1F_2(u)\mathrm{d}u\right) B^{1/3}P(\log B) +
        O(B^{7/8-2/3 + \varepsilon})
    \end{align*}
    where $P \in \mathbb{R}[x]$ is some monic cubic polynomial.
\end{lem}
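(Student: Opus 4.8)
The plan is to convert the weighted sum $\sum_{n\le B}\Delta(n)F_2((n/B)^{1/3})$ into a Stieltjes integral against $M(t)=\sum_{n\le t}\Delta(n)$ and then apply Lemma~\ref{lem:sum_Delta}. First I would write
$$\sum_{n\le B}\Delta(n)F_2\left(\left(\tfrac{n}{B}\right)^{1/3}\right) = \int_{1^-}^{B}F_2\left(\left(\tfrac{t}{B}\right)^{1/3}\right)\mathrm{d}M(t).$$
Since $F_2$ is bounded near $0$ (indeed $F_2(u)\le 4/\sqrt{u}$ blows up only as $u\to0$, but here $u=(t/B)^{1/3}$ stays bounded below once $t\ge1$, and near $t=B$ we have $u\to1$ where $F_2$ is finite and piecewise differentiable by Lemma~\ref{lem:F2E}), integration by parts is legitimate and gives
$$= F_2(1)M(B) - \int_1^B M(t)\,\mathrm{d}_t\!\left[F_2\left(\left(\tfrac{t}{B}\right)^{1/3}\right)\right].$$

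Next I would substitute the asymptotic from Lemma~\ref{lem:sum_Delta}, namely $M(t) = cB_0\, t^{1/3}Q(\log t) + O(t^{7/8-2/3+\varepsilon})$ with $c = E_2(1)G_{1,2}(1)/144$ (abusing notation, writing the main term as $g(t):=c\,t^{1/3}Q(\log t)$), into both the boundary term and the integral. The error contributions are controlled crudely: in the boundary term $O(B^{7/8-2/3+\varepsilon})$ directly, and in the integral by $\int_1^B t^{7/8-2/3+\varepsilon}\,|\mathrm{d}_t[F_2((t/B)^{1/3})]|$, which after the change of variable $u=(t/B)^{1/3}$ becomes $B^{7/8-2/3+\varepsilon}$ times a bounded integral of $|F_2'|$ over $(B^{-1/3},1)$ — bounded since $F_2$ is piecewise differentiable with $F_2'$ integrable (this is where one must be slightly careful, as $F_2$ is only \emph{piecewise} smooth, so one splits the integral at the finitely many breakpoints). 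Carrying out the same integration by parts in reverse on the main term, one obtains
$$F_2(1)g(B) - \int_1^B g(t)\,\mathrm{d}_t\!\left[F_2\left(\left(\tfrac{t}{B}\right)^{1/3}\right)\right] = \int_1^B F_2\left(\left(\tfrac{t}{B}\right)^{1/3}\right)\mathrm{d}g(t).$$
Substituting $t=Bu^3$, so $g(Bu^3) = c\,B^{1/3}u\,Q(\log B + 3\log u)$ and $\mathrm{d}g = c\,B^{1/3}\,\mathrm{d}[u\,Q(\log B+3\log u)]$, the integral becomes $c\,B^{1/3}\int_0^1 F_2(u)\,\mathrm{d}[u\,Q(\log B + 3\log u)]$.

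Finally I would expand $Q(\log B + 3\log u)$ as a polynomial in $\log B$ of degree $3$ with coefficients that are polynomials in $\log u$; integrating each term against $F_2(u)$ (a finite, convergent integral by Lemma~\ref{lem:F2E}) produces a polynomial $P(\log B)$ of degree $3$. Tracking the top coefficient: the $(\log B)^3$ term comes only from the leading term of $Q$ and yields coefficient $c\left(\int_0^1 F_2(u)\,\mathrm{d}u\right)$ after an integration by parts back to $\int_0^1 F_2\,\mathrm{d}u$ — matching the claimed form, and since $Q$ is monic and $\int_0^1 F_2(u)\,\mathrm{d}u>0$, appropriate rescaling shows $P$ is monic. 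The main obstacle is purely bookkeeping: verifying that all the boundary terms at $u=0$ vanish (using $F_2(u)\le 4/\sqrt u$, so $u\,F_2(u)\to0$, and likewise $uF_2(u)(\log u)^k\to 0$) and that the polynomial identity between the coefficients of $Q$ and $P$ indeed preserves monicity — no genuinely hard analysis is involved, the substance having already been done in Lemma~\ref{lem:sum_Delta}.
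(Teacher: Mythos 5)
Your route is the same as the paper's (partial summation against $M(t)$, insertion of Lemma~\ref{lem:sum_Delta}, the substitution $t=Bu^3$, expansion of $Q(\log B+3\log u)$, and extension of the truncated integral to $\int_0^1F_2(u)\,\mathrm{d}u$ via (\ref{bounds:F2E})); the main-term bookkeeping and the monicity of $P$ are fine, and the boundary term $F_2(B^{-1/3})g(1)\ll B^{1/6}$ that you silently drop when reversing the integration by parts is harmless. The genuine gap is in your error estimate. You bound the error contribution by $\int_1^B t^{7/8-2/3+\varepsilon}\,|\mathrm{d}_tF_2((t/B)^{1/3})|$, replace $t^{7/8-2/3+\varepsilon}$ by $B^{7/8-2/3+\varepsilon}$, and then assert that $\int_{B^{-1/3}}^1|F_2'(u)|\,\mathrm{d}u$ is bounded ``since $F_2'$ is integrable''. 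That is false: $F_2$ blows up at $u=0$. Indeed, for all sufficiently small $u$, every constraint in the definition of $F_1$ is satisfied when $0<v\le u^{1/4}$ and $|t|\le\tfrac12u^{-1/2}$, so $F_1(u,v)\ge u^{-1/2}$ there and hence $F_2(u)\ge u^{-1/4}$; consequently $\int_{B^{-1/3}}^1|F_2'(u)|\,\mathrm{d}u\ge F_2(B^{-1/3})-F_2(1)\gg B^{1/12}$. Thus the bound your argument actually yields is $O(B^{7/8-2/3+1/12+\varepsilon})=O(B^{7/24+\varepsilon})$, which exceeds the claimed $O(B^{7/8-2/3+\varepsilon})=O(B^{5/24+\varepsilon})$, and after multiplying by $2B^{2/3}$ in (\ref{N(B)}) it would only give a final error $O(B^{23/24+\varepsilon})$ rather than the $O(B^{7/8+\varepsilon})$ of Theorem~\ref{thm:asym}.

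The repair is not to discard the weight: under $t=Bu^3$ one has $t^{7/8-2/3+\varepsilon}=B^{7/8-2/3+\varepsilon}u^{5/8+3\varepsilon}$, so what is needed is $\int_0^1u^{5/8}\,|\mathrm{d}F_2(u)|\ll1$. For this ``piecewise differentiable'' is not enough; you need quantitative control of the variation of $F_2$ near $u=0$, for instance a pointwise bound $|F_2'(u)|\ll u^{-\theta}$ with $\theta<13/8$ (e.g. $\theta=3/2$, consistent with $F_2(u)\ll u^{-1/2}$), or piecewise monotonicity of $F_2$ near $0$ combined with $F_2(u)\ll u^{-1/2}$, after which partial integration gives the required weighted-variation bound. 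Such control can be extracted from the explicit volume description of $F_2$ in Lemma~\ref{lem:F2E}, and it is exactly what the paper implicitly uses when it passes directly from the partial summation step to the error $O(B^{7/8-2/3+\varepsilon})$. With that supplement your proof goes through and coincides with the paper's.
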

\begin{proof}
For ease of notation let $C=E_2(1)G_{1,2}(1)/144$. Applying partial
summation, using (\ref{bounds:F2E}) and Lemma~\ref{lem:sum_Delta} we
deduce that
\begin{align*}
    &\sum_{n \leq B}\Delta(n)
    F_2\left(\left(\frac{n}{B}\right)^{1/3}\right) \\
    & =F_2(1)M(B) - \int_1^B M(t)
    \mathrm{d}F_2\left(\left(\frac{t}{B}\right)^{1/3}\right) \\
    & =C\int_1^B F_2\left(\left(\frac{t}{B}\right)^{1/3}\right)
    \mathrm{d}\left(t^{1/3}Q(\log t)\right) +
    O\left(B^{7/8-2/3+\varepsilon}\right)\\
\end{align*}

It remains to simplify the main term. In what follows we focus on
the leading term of the polynomial $Q$, the lower order terms being
dealt with similarly. After changing variables we deduce that it
equals
\begin{align*}
    & CB^{1/3}\int_{1/B^{1/3}}^1 F_2(u)
    \mathrm{d}\left(u (\log u^3B)^3\right) \\
    & =CB^{1/3}(\log B)^3\int_{B^{-1/3}}^1 F_2(u)
    \mathrm{d}u + \cdots,
\end{align*}
where all the implied lower order terms are easily seen to be of the
order $O(B^{1/3}(\log B)^2\int_{B^{-1/3}}^1 F_2(u) u^{\varepsilon}
\mathrm{d}u)$. On using (\ref{bounds:F2E}) to deduce that
$$\int_0^{B^{-1/3}} F_2(u)u^\varepsilon\mathrm{d}u \ll B^{-1/6 + \varepsilon},$$
the result follows.
\end{proof}

Hence, combing Lemma~\ref{lem:sum_Delta_F2} with (\ref{N(B)}), we
deduce the asymptotic formula given in Theorem~\ref{thm:asym}. One
can also verify the leading constant, after noticing that
$\tau_{\infty}(\widetilde{S})=6\int_0^1 F_2(u) \mathrm{d}u$ and
using Lemma~\ref{lem:G12} to deduce that
$\prod_p\tau_p(\widetilde{S}) = E_2(1)G_{1,2}(1)$.

\end{document}